\newtheorem{theorem}{Theorem}[section]
\newtheorem{lemma}[theorem]{Lemma}
\newtheorem{corollary}[theorem]{Corollary}
\newtheorem{remark}[theorem]{Remark}
\newtheorem{proposition}[theorem]{Proposition}
\begin{document}

\title{Asymptotic behavior of averaged and firmly nonexpansive mappings in geodesic spaces}
\author{Adriana Nicolae}  
\date{}
\maketitle

\begin{center}
{\footnotesize
Simion Stoilow Institute of Mathematics of the Romanian Academy, P.O. Box 1-764, 014700 Bucharest, Romania
\ \\
Department of Mathematics, Babe\c s-Bolyai University, Kog\u alniceanu 1, 400084, Cluj-Napoca, Romania
\ \\
E-mail: adriana.nicolae@imar.ro
}
\end{center}

\begin{abstract}
We further study averaged and firmly nonexpansive mappings in the setting of geodesic spaces with a main focus on the asymptotic behavior of their Picard iterates. We use methods of proof mining to obtain an explicit quantitative version of a generalization to geodesic spaces of a result on the asymptotic behavior of Picard iterates for firmly nonexpansive mappings proved by Reich and Shafrir. From this result we obtain effective uniform bounds on the asymptotic regularity for firmly nonexpansive mappings. Besides this, we derive effective rates of asymptotic regularity for sequences generated by two algorithms used in the study of the convex feasibility problem in a nonlinear setting. \\ 
\ \\
\noindent {\em Keywords:}  averaged mapping;  firmly nonexpansive mapping; convex feasibility problem;  
geodesic space; asymptotic regularity; proof mining.

\end{abstract}

\section{Introduction}
In Banach spaces, firmly nonexpansive and averaged mappings form special classes of nonexpansive mappings which possess interesting properties that are not common to all nonexpansive mappings. This paper is mainly motivated by a very recent work by  Ariza-Ruiz, Leu\c stean and L\' opez-Acedo \cite{AriLeuLop12} where firmly nonexpansive mappings are defined and studied in suitable classes of geodesic spaces. 

Firmly nonexpansive mappings were introduced by Bruck \cite{Bru73} in the following way: let $C$ be a nonempty closed and convex subset of a real Banach space $X$. A mapping $T : C \to X$ is {\it firmly nonexpansive} if for each $x,y \in C$ and $t \ge 0$, $\|Tx - Ty\| \le \|t(x-y) + (1-t)(Tx-Ty)\|$. An important example of a firmly nonexpansive mapping is the metric projection onto a closed convex subset of a Hilbert space.

Firmly nonexpansive self-mappings have notable properties when defined in Banach spaces. For instance, if a firmly nonexpansive mapping has fixed points, then it is asymptotically regular and, in more particular settings, the Picard iterates converge weakly to a fixed point of the mapping \cite{ReiSha87}. Such results find natural counterparts in the Hilbert ball \cite{ReiSha87, Rei88, KopRei09}. In \cite{AriLeuLop12}, firmly nonexpansive mappings are studied in geodesic spaces with emphasis on fixed point results, on the asymptotic behavior of Picard iterates for such mappings and on applications thereof.

Averaged mappings already implicitly appeared in Krasnoselski's work \cite{Kra55}, while the term ``averaged'' was coined in \cite{BaBrRe79}. Having a nonempty convex subset $C$ of a normed space $X$,  an {\it averaged} mapping $T_\lambda : C \to C$ has the form $T_\lambda = (1-\lambda) I + \lambda T$, where $\lambda \in (0,1)$, $I$ is the identity mapping and $T$ is nonexpansive. The metric projection onto a closed convex subset of a Hilbert space is averaged. In fact, in Hilbert spaces, a mapping is firmly nonexpansive if and only if it is averaged with $\lambda = 1/2$. A well-known result concerning averaged mappings was proved by Ishikawa in \cite{Ish76} and states that in any Banach space, $T_\lambda$ is asymptotically regular whenever it has bounded orbits. Ishikawa's result has been extended by Goebel and Kirk in \cite{GoeKir83} to the setting of hyperbolic spaces. In particular Banach spaces, the Picard iterates of averaged mappings converge weakly to a fixed point of the mapping provided the mapping is not fixed point free. The asymptotic behavior of averaged mappings has been further studied in the context of Banach spaces by Baillon, Bruck and Reich \cite{BaBrRe79} and in the Hilbert ball by Reich \cite{Rei85}. Averaged mappings have remarkable applications. Many iterative algorithms used for instance in signal processing and image reconstruction employ averaged mappings for particular choices of the nonexpansive mapping $T$. In the setting of Hilbert and Banach spaces, such algorithms have been intensively studied in the past decades (for a unified treatment of some of these algorithms see, for instance, \cite{Byr03}). 

Our first main goal is to further study averaged and firmly nonexpansive mappings in geodesic spaces. To this end we apply methods of proof mining to give effective results on the asymptotic behavior of averaged and firmly nonexpansive mappings. Proof mining deals with the analysis of mathematical proofs with the goal of extracting additional information from them. By analyzing proofs using methods from logic, one may be able to strengthen conclusions of results mostly by obtaining effective bounds and uniformities in the bounds. More details about proof mining techniques can be found in \cite{Koh08}. Thus, in Section \ref{sect_betw} we study a betweenness property of metric spaces which is useful for relating periodic and fixed points for firmly nonexpansive and averaged mappings. We provide significant examples of metric spaces that satisfy this property. In Section \ref{sect_effective} we gather results on the asymptotic behavior of averaged mappings and provide a quantitative version of a generalization to geodesic spaces of a very important result due to Reich and Shafrir \cite{ReiSha87} on the asymptotic behavior of Picard iterates of firmly nonexpansive mappings. As an immediate consequence we obtain an exponential rate of asymptotic regularity for the Picard iterates. We remark that in \cite{AriLeuLop12} another rate of asymptotic regularity was established for $UCW$-hyperbolic spaces which, in particular, is quadratic in the case of CAT$(0)$ spaces. However, our exponential rate is the only known rate which holds even in the setting of normed spaces.

The second main goal of the paper is to derive effective rates of asymptotic regularity for sequences obtained by two methods used in the study of the convex feasibility problem in a nonlinear context. These two schemes are different forms of a very general method introduced by Bauschke and Borwein in \cite{BauBor96} to provide a unified treatment of various methods used in the study of the convex feasibility problem. The convex feasibility problem consists in finding a point in the intersection of finitely many sets if such a point exists. The first algorithm that we focus on is the well-known alternating projection method developed by von Neumann \cite{vNeu33} (see also \cite{KopRei04} for an elementary geometric proof). This method was studied in the Hilbert ball by Reich \cite{Rei93} and was recently extended to the setting of CAT$(0)$ spaces by Ba\v c\' ak, Searston and Sims \cite{BacSeaSim12}. In fact, one obtains linear convergence of this method even in a nonlinear setting if additional assumptions are imposed on the sets \cite{BauBor93, BacSeaSim12}. In the general case when no further conditions are assumed on the sets one can only obtain weak convergence of the method \cite{Hun04, MatRei03}. For this case we give here a quadratic rate of asymptotic regularity for the sequence generated by the alternating projection method. Additionally, we focus on a parallel projection scheme (see \cite{Com97} for more references on parallel projection methods) defined in terms of weighted averages of nonexpansive retractions which extends a method studied in Hilbert spaces by Crombez \cite{Cro91} and later generalized by Takahashi and Tamura \cite{TakTam96} in the setting of uniformly convex Banach spaces. We show that this method finds a natural counterpart in the geodesic setting and give a rate of asymptotic regularity for $UCW$-hyperbolic spaces. In the particular setting of CAT$(0)$ spaces, this rate is quadratic and it is polynomial for $L_p$ spaces with $1 < p < \infty$. At the same time we prove the $\Delta$-convergence of this method, showing that it can be used in a nonlinear setting. To our knowledge, the quantitative results that we obtain are new even in the Hilbert setting.

\section{Basic notions on geodesic spaces} \label{sect_prelim}
Let $(X,d)$ be a metric space.  A {\it geodesic path} from $x$ to $y$ is a mapping $c:[0,l] \to X$, where $[0,l] \subseteq \mathbb{R}$, such that $c(0) = x, c(l) = y$ and $d\left(c(t),c(t^{\prime})\right) = \left|t - t^{\prime}\right|$ for every $t,t^{\prime} \in [0,l]$. The image $c\left([0,l]\right)$ of $c$ forms a {\it geodesic segment} which joins $x$ and $y$. Note that a geodesic segment from $x$ to $y$ is not necessarily unique. If no confusion arises, we use  $[x,y]$ to denote a geodesic segment joining $x$ and $y$.  $(X,d)$ is a {\it (uniquely) geodesic space} if every two points $x,y \in X$ can be joined by a (unique) geodesic path. A point $z\in X$ belongs to the geodesic segment $[x,y]$ if and only if there exists $t\in [0,1]$ such that $d(z,x)= td(x,y)$ and $d(z,y)=(1-t)d(x,y)$, and we write $z=(1-t)x+ty$ for simplicity. This, too, may not be unique. A subset $C$ of $X$ is {\it convex} if $C$ contains any geodesic segment that joins every two points in $C$. For a comprehensive treatment of geodesic metric spaces one may see for example \cite{Bri99, Pap05}. 

The {\it metric} $d: X \times X \to \mathbb{R}$ is said to be {\it convex} if for any $x,y,z \in X$ one has
\[d(x,(1-t)y + tz) \le (1-t)d(x,y) + td(x,z) \mbox{ for all } t \in [0,1].\]

A geodesic space $(X,d)$ is {\it Busemann convex} if given any pair of geodesic paths $c_1 : [0, l_1] \to X$ and $c_2 : [0,l_2] \to X$  one has 
\[d(c_1(tl_1),c_2(tl_2)) \le (1-t)d(c_1(0),c_2(0)) + td(c_1(l_1),c_2(l_2)) \mbox{ for all } t \in [0,1].\] 
Any Busemann convex space has convex metric.

$W$-hyperbolic spaces are defined in \cite{Koh08} using the notion of convexity mapping. The triple $(X,d,W)$ is a {\it W-hyperbolic space} if $(X,d)$ is a metric space and $W : X \times X \times [0,1] \to X$ is a convexity mapping such that for each $x,y,z,w \in X$ and for any $t,t' \in [0,1]$ we have
\begin{itemize}
\item[(W1)] $d(z,W(x,y,t)) \le (1-t)d(z,x) + td(z,y)$,
\item[(W2)] $d(W(x,y,t),W(x,y,t')) = |t-t'|d(x,y)$,
\item[(W3)] $W(x,y,t) = W(y,x,1-t)$,
\item[(W4)] $d(W(x,z,t),W(y,w,t)) \le (1-t)d(x,y)+td(z,w)$.
\end{itemize}
Notice that hyperbolic spaces are defined in different ways in the literature (see, for instance, \cite{GoeKir83, GoeRei84, ReiSha90}).

Any $W$-hyperbolic space is a geodesic space and it is uniquely geodesic if and only if for every $x,y \in X$, $x \ne y$ and each $\lambda \in (0,1)$ there exists a unique $z \in X$ (namely $z = W(x,y,\lambda)$) such that $d(x,z) = \lambda d(x,y)$ and $d(y,z) = (1-\lambda) d(x,y)$. Note also that a metric space is Busemann convex if and only if it is a uniquely geodesic $W$-hyperbolic space (see \cite{AriLeuLop12} for details).

A {\it geodesic triangle} $\Delta(x_1,x_2,x_3)$ consists of three points $x_1, x_2$ and $x_3$ in $X$ (its {\it vertices}) and three geodesic segments corresponding to each pair of points (its {\it edges}). For the geodesic triangle $\Delta$=$\Delta(x_1,x_2,x_3)$, a {\it comparison triangle} is a triangle $\bar{\Delta} = \Delta(\bar{x}_1, \bar{x}_2, \bar{x}_3)$ in $\mathbb{R}^2$ such that $d(x_i,x_j) = d_{\mathbb{R}^2}(\bar{x}_i,\bar{x}_j)$ for $i,j \in \{1,2,3\}$. Comparison triangles of geodesic triangles always exist and are unique up to isometry.

A geodesic triangle $\Delta$ satisfies the {\it CAT$(0)$ inequality} if for every comparison triangle $\bar{\Delta}$ of $\Delta$ and for every $x,y \in \Delta$ we have
\[d(x,y) \le d_{\mathbb{R}^2}(\bar{x},\bar{y}),\]
where  $\bar{x},\bar{y} \in \bar{\Delta}$ are the corresponding points of $x$ and $y$, i.e., if $x = (1-t)x_i + tx_j$ then $\bar{x} = (1-t)\bar{x}_i + t\bar{x}_j$.

A {\it CAT$(0)$ space} (also known as a space of bounded curvature in the sense of Gromov) is a geodesic space for which every geodesic triangle satisfies the CAT$(0)$ inequality. CAT$(0)$ spaces have attracted the attention of a large number of researchers in the last few decades due to their rich geometry and relevance in different problems. The fact that a CAT$(0)$ space is Busemann convex has a great impact on the geometry of the space, but being Busemann convex is a weaker property than being CAT$(0)$. The following inequality is called the {\it (CN) inequality} and is equivalent to the CAT$(0)$ condition. Let $x,y_1,y_2$ be points in a CAT$(0)$ space and let $m = (1-t)y_1 +ty_2$ for some $t \in [0,1]$. Then,
\[d\left(x,m\right)^2 \le (1-t)d(x,y_1)^2 + td(x,y_2)^2 - t(1-t)d(y_1,y_2)^2.\] 

A geodesic space $(X,d)$ is {\it uniformly convex} \cite{GoeRei84, ReiSha90}  if for any $r > 0$ and $\varepsilon \in (0,2]$ there exists $\delta \in (0,1]$ such that if $a,x,y \in X$ with $d(x,a) \le r$, $d(y,a) \le r$ and $d(x,y) \ge \varepsilon r$ then
\[d\left(\frac{1}{2}x + \frac{1}{2}y,a\right) \le (1 - \delta)r.\]
A mapping $\delta : (0,\infty) \times (0,2] \to (0,1]$ providing such a $\delta = \delta(r,\varepsilon)$ for a given $r>0$ and $\varepsilon \in (0,2]$ is called a {\it modulus of uniform convexity}. The mapping $\delta$ is {\it monotone} if for every fixed $\varepsilon$ it decreases with respect to $r$. CAT$(0)$ spaces are uniformly convex metric spaces admitting a modulus of uniform convexity which does not depend on the radius of the balls.

In fact, uniform convexity was defined in the setting of $W$-hyperbolic spaces in \cite{Leu07} similarly to above. Uniformly convex $W$-hyperbolic spaces with a monotone modulus of uniform convexity were called {\it $UCW$-hyperbolic spaces}. However, any uniformly convex $W$-hyperbolic space is Busemann convex so $UCW$-hyperbolic spaces form a particular class of uniformly convex metric spaces with a monotone modulus of uniform convexity.

If we drop in the above definition the uniformity conditions then we find the notion of {\it strict convexity} in metric spaces. Consequently, every uniformly convex geodesic space is strictly convex. Moreover, any Busemann convex space is strictly convex. Note that strictly convex metric spaces are uniquely geodesic.

%Note that in any uniformly convex geodesic space with convex metric and with a modulus of uniform convexity $\delta$ we have that (see for instance \cite[Lemma 7]{Leu07}) for any $r>0$, $\varepsilon \in (0,2]$, $\lambda \in [0,1]$ and $a,x,y \in X$ with $d(x,a) \le r$, $d(y,a) \le r$ and $d(x,y) \ge \varepsilon r$,
%\[d\left((1 - \lambda) x + \lambda y,a\right) \le \left(1 - 2\lambda(1-\lambda)\delta(r, \varepsilon)\right)r.\]

\section{Relating periodic points to fixed points through a betweenness property} \label{sect_betw}
In this section we focus on the relation between periodic and fixed points for averaged and firmly nonexpansive mappings. We recall first a betweenness property which plays an essential role in proving fixed point results for these two classes of mappings in geodesic spaces. We say that a point $y$ in a metric space {\it lies between} two distinct points $x$ and $z$ if $y$ is distinct from $x$ and $z$ and $d(x, z) = d(x, y) + d(y, z)$. In geodesic spaces, the fact that $y$ lies between $x$ and $z$ is equivalent to the fact that the three points are pairwise distinct and $y$ belongs to a geodesic segment joining $x$ and $z$. 
	
In any metric space $(X,d)$ we have the following transitivity property for betweenness which we call in this work the {\it weak betweenness property} (see \cite[Proposition 2.2.13]{Pap05}): if $x,y,z,w$ are pairwise distinct points in $X$, then $y$ lies between $x$ and $z$ and $z$ lies between $x$ and $w$ if and only if $y$ lies between $x$ and $w$ and $z$ lies between $y$ and $w$.

We say that $X$ satisfies the {\it betweenness property} if for every four pairwise distinct points $x,y,z,w \in X$, if $y$ lies between $x$ and $z$ and $z$ lies between $y$ and $w$, then $y$ and $z$ lie between $x$ and $w$. Note that the betweenness property induces the following property: for $n \ge 2$ and $x_0, x_1, \ldots, x_n \in X$, if $x_k$ lies between $x_{k-1}$ and $x_{k+1}$ for each $1 \le k \le n-1$, then $x_k$ lies between $x_0$ and $x_{k+1}$ for each $1 \le k \le n-1$. 

Let $(X,d)$ be a geodesic space, $C \subseteq X$ nonempty and convex, $\lambda \in (0,1)$ and $T : C \to C$ nonexpansive. The mapping $T_\lambda : C \to C$ defined by $T_\lambda x = (1 - \lambda)x + \lambda Tx$ is called {\it averaged} . Note that $x$ and $Tx$ must not be joined by a unique geodesic segment, $T_\lambda x$ being assumed to belong to one such fixed segment (more precisely one can consider a convexity mapping which assigns to each two points exactly one geodesic segment joining them). 

It is immediate that $T_\lambda$ and $T$ have the same fixed point set. Also, if $X$ is Busemann convex, then any averaged mapping is nonexpansive. In general, we have that $d(T_\lambda ^2x,T_\lambda x) \le d(T_\lambda x, x)$ for each $x \in C$. Indeed,
\begin{align*}
d(T_\lambda^2 x, T_\lambda x) & = d\left((1-\lambda)T_\lambda x + \lambda T(T_\lambda x), T_\lambda x\right) \\
& =  \lambda d(T_\lambda x, T(T_\lambda x)) \le \lambda \left(d(T_\lambda x, Tx) +  d(Tx, T(T_\lambda x))\right)\\
& \le \lambda \left(d(T_\lambda x, Tx) + d(x, T_\lambda x) \right) = \lambda d(x,Tx) = d(T_\lambda x, x).
\end{align*}

Let $X$ be a metric space and $C \subseteq X$. Recall that a mapping $T : C \to C$ is said to be {\it asymptotically regular at $x \in C$} if $\displaystyle \lim_{n \to \infty}d(T^n x, T^{n+1}x) = 0$ and it is {\it asymptotically regular} if it is asymptotically regular at each $x \in C$. This concept was introduced by Browder and Petryshyn for normed spaces in \cite{BroPet66}. More generally, we say that a sequence $(x_n) \subseteq X$ is {\it asymptotically regular} if $\displaystyle \lim_{n \to \infty} d(x_n, x_{n+1}) =0$. A rate of convergence of $(d(x_n, x_{n+1}))$ towards $0$ will be called a {\it rate of asymptotic regularity}.

In the case of averaged mappings, it follows from \cite[Proposition 2]{GoeKir83} proved by Goebel and Kirk that in any geodesic space with convex metric, if $(T_\lambda^n x)$ is bounded, then $T_\lambda$ is asymptotically regular at $x \in C$. Thus, in such a context any periodic point of an averaged mapping is a fixed point. We focus next on the relation between periodic and fixed points for averaged mappings in geodesic spaces with the betweenness property.

\begin{proposition}\label{prop_betw-averaged}
Let $X$ be a geodesic space with the betweenness property, $C$ a nonempty and convex subset of $X$. Suppose $T_\lambda : C \to C$ is an averaged mapping. Then any periodic point of $T_\lambda$ is a fixed point of $T_\lambda$.
\end{proposition}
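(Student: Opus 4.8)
I would argue by contradiction, combining the monotonicity $d(T_\lambda^{k+1}x,T_\lambda^k x)\le d(T_\lambda^k x,T_\lambda^{k-1}x)$ recorded above with the betweenness property so as to force the Picard iterates of a periodic point to be ``aligned'', which is incompatible with periodicity. Concretely, let $x\in C$ satisfy $T_\lambda^n x = x$; since the case $n=1$ is exactly the conclusion, I may assume $n\ge 2$. Put $x_k = T_\lambda^k x$ and $a_k = d(x_k,x_{k+1})$. Then $x_{k+n}=x_k$ for all $k\ge 0$, and applying the inequality $d(T_\lambda^2 y,T_\lambda y)\le d(T_\lambda y,y)$ with $y=x_k$ yields $a_{k+1}\le a_k$; since $a_0\ge a_1\ge\dots\ge a_n = a_0$, the sequence $(a_k)$ is constant, say $a_k = a$ for all $k$. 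If $a=0$ then $T_\lambda x = x$ and we are done, so suppose $a>0$ and seek a contradiction.

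The heart of the argument is the claim that, for every $k\ge 0$, the point $x_{k+1}$ lies between $x_k$ and $x_{k+2}$. To prove it I would revisit the chain of (in)equalities establishing $d(T_\lambda^2 x_k,T_\lambda x_k)\le d(x_k,T_\lambda x_k)$: since equality now holds, every intermediate inequality is an equality, which forces $d(Tx_k,Tx_{k+1}) = d(x_k,x_{k+1}) = a$ and $d(x_{k+1},Tx_{k+1}) = d(x_{k+1},Tx_k) + d(Tx_k,Tx_{k+1})$. Using $x_{k+1} = (1-\lambda)x_k+\lambda Tx_k$ and $x_{k+2} = (1-\lambda)x_{k+1}+\lambda Tx_{k+1}$, a short computation expresses all the relevant distances in terms of $a$ and $\lambda$ and shows that $x_{k+1}$ lies between $x_k$ and $Tx_k$, while $Tx_k$ lies between $x_{k+1}$ and $Tx_{k+1}$; pairwise distinctness of $x_k,x_{k+1},Tx_k,Tx_{k+1}$ follows from $0<\lambda<1$, $a>0$ and the triangle inequality. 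The betweenness property applied to these four points then gives that $Tx_k$ lies between $x_k$ and $Tx_{k+1}$, so $d(x_k,Tx_{k+1}) = a/\lambda + a$. Combining this with $d(x_{k+2},Tx_{k+1}) = (1-\lambda)a/\lambda$ and the triangle inequality $d(x_k,x_{k+2})\ge d(x_k,Tx_{k+1}) - d(x_{k+2},Tx_{k+1})$ gives $d(x_k,x_{k+2})\ge 2a$; since $d(x_k,x_{k+2})\le a+a = 2a$ trivially, equality holds, and hence $x_{k+1}$ lies between $x_k$ and $x_{k+2}$, proving the claim.

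With the claim established, $x_k$ lies between $x_{k-1}$ and $x_{k+1}$ for each $1\le k\le n-1$, so the induced form of the betweenness property applied to $x_0,x_1,\dots,x_n$ yields that $x_{n-1}$ lies between $x_0$ and $x_n$. Since $x_n = x_0$, this is impossible (it would force $d(x_0,x_{n-1}) = 0$, contradicting $a>0$), and the contradiction shows $a = 0$, i.e. $T_\lambda x = x$. I expect the delicate point to be the equality analysis: pinning down exactly which inequalities in the monotonicity chain become equalities, reading off the precise values of $d(x_{k+1},Tx_k)$, $d(Tx_k,Tx_{k+1})$ and $d(x_{k+1},Tx_{k+1})$, and checking the pairwise distinctness needed to legitimately invoke the betweenness property; once these are in place, the remaining steps are only the triangle inequality.
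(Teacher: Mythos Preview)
Your proof is correct and follows essentially the same strategy as the paper's: deduce constant step size $a$ from periodicity and monotonicity, run the equality analysis to see that $Tx_k$ lies between $x_{k+1}$ and $Tx_{k+1}$, apply the betweenness property to the quadruple $x_k,x_{k+1},Tx_k,Tx_{k+1}$, and then iterate to a contradiction. The only tactical differences are that the paper invokes the weak betweenness property (rather than your reverse-triangle computation of $d(x_k,x_{k+2})=2a$) to obtain that $x_{k+1}$ lies between $x_k$ and $x_{k+2}$, treats the period-$2$ case separately, and derives the final contradiction from the extra equality $d(x_0,x_{n-1})=a$ (coming from $x_n=x_0$) instead of pushing the induced betweenness all the way out to $x_n$.
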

\begin{proof}
Suppose $x$ is a periodic point which is not fixed and take $m \ge 1$ minimal such that $x = T_\lambda^{m+1}x$. Then, 
\begin{align*}
d(x,T_\lambda^m x) &= d(T_\lambda^{m+1}x,T_\lambda^m x) \le d(T_\lambda^m x, T_\lambda^{m-1} x) \le \ldots \le d(T_\lambda x,x) \\
&= d(T_\lambda^{m+2}x, T_\lambda^{m+1} x) \le d(T_\lambda^{m+1} x, T_\lambda^m x) = d(x, T_\lambda^m x),
\end{align*}
which yields
\[d(T_\lambda x,x) = d(T_\lambda ^2x,T_\lambda x) = \ldots = d(T_\lambda ^mx, T_\lambda ^{m-1}x) = d(x,T_\lambda ^m x) = \gamma > 0.\]
Since $T_\lambda$ is averaged, for every $1 \le k \le m$ we have
\begin{align*}
\gamma & = d(T_\lambda ^{k+1} x, T_\lambda ^k x) = \lambda d\left(T_\lambda ^k x, T(T_\lambda ^k x)\right) \le \lambda \left( d\left(T_\lambda^k x, T(T_\lambda^{k-1} x)\right) + d\left(T(T_\lambda^{k-1} x), T(T_\lambda ^k x)\right) \right) \\
&\le \lambda \left( d\left(T_\lambda^k x, T(T_\lambda^{k-1} x)\right) + d\left(T_\lambda^{k-1} x, T_\lambda ^k x\right) \right) = \lambda d\left(T_\lambda^{k-1} x, T(T_\lambda^{k-1} x)\right) = d(T_\lambda^k x, T_\lambda ^{k-1} x) = \gamma.
\end{align*} 
Thus, for each $1 \le k \le m$, $d\left(T_\lambda^k x, T(T_\lambda ^k x)\right) = d\left(T_\lambda^k x, T(T_\lambda^{k-1} x)\right) + d\left(T(T_\lambda^{k-1} x), T(T_\lambda ^k x)\right)$. If for some $k$, $1 \le k \le m$, $T(T_\lambda^{k-1} x) = T_\lambda^k x$ or $T(T_\lambda^{k-1} x) = T(T_\lambda^k x)$, then we obtain that $\gamma = 0$, a contradiction. Indeed, if $T(T_\lambda^{k-1} x) = T_\lambda^k x$, then $T_\lambda^{k-1} x = T_\lambda^k x$, so $\gamma = 0$. If $T(T_\lambda^{k-1} x) = T(T_\lambda^k x)$, then
\[\frac{\gamma}{\lambda} = d\left(T_\lambda^k x, T(T_\lambda^k x)\right) = d\left(T_\lambda^k x, T(T_\lambda^{k-1} x)\right) = (1-\lambda) d\left(T_\lambda^{k-1} x, T(T_\lambda^{k-1} x)\right) = \frac{1-\lambda}{\lambda} \gamma,\]
which implies that $\gamma = 0$. Therefore, for each $1 \le k \le m$, $T(T_\lambda^{k-1} x)$ lies between $T_\lambda^k x$ and $T(T_\lambda ^k x)$.\\
If $m = 1$ then $k=1$ so $T(T_\lambda^{k-1} x) = Tx$ lies between $T(T_\lambda x)$ and $T_\lambda x$. Since $T_\lambda x$ lies between $x$ and $Tx$ we have by the betweenness property that $T_\lambda x$ lies between $T(T_\lambda x)$ and $x$. Thus,
\[d\left(x, T(T_\lambda x)\right) = d(x, T_\lambda x) + d\left(T_\lambda x, T(T_\lambda x)\right). \]
Since $T_\lambda ^2 x = x$ this yields that
\[d\left(x, T(T_\lambda x)\right) = d(x, T_\lambda x) + d(x, T_\lambda x) + d\left(x, T(T_\lambda x)\right),\]
which is impossible.\\
Suppose now $m \ge 2$. Since $T_\lambda^k x$ lies between $T_\lambda^{k-1} x$ and $T(T_\lambda^{k-1} x)$ and $T(T_\lambda^{k-1} x)$ lies between $T_\lambda^k x$ and $T(T_\lambda^k x)$ we apply again the betweenness property to get that $T_\lambda^k x$ lies between $T_\lambda^{k-1} x$ and $T(T_\lambda^k x)$. Knowing that $T_\lambda^{k+1} x$ lies between $T_\lambda^k x$ and $T(T_\lambda^k x)$, by applying the weak betweenness property, we obtain that $T_\lambda^k x$ lies between $T_\lambda^{k-1} x$ and $T_\lambda^{k+1} x$ for all $k = 1, \ldots, m$. Finally we obtain by the betweenness property that $T_\lambda^{m-1} x$ lies between $x$ and $T_\lambda^m x$, so
\[\gamma = d(x,T_\lambda^m x)= d(T_\lambda^{m-1} x,T_\lambda^m x)+d(T_\lambda^{m-1} x,x)=\gamma+d(T_\lambda^{m-1} x,x)>\gamma,\]
a contradiction.
\end{proof}

We focus next on the connection between periodic and fixed points for firmly nonexpansive mappings. Let $(X,d)$ be a geodesic space, $C \subseteq X$ and $T : C \to X$. For $\lambda \in (0,1)$, we say that the mapping $T$ is {\it $\lambda$-firmly nonexpansive} if $T$ is nonexpansive and for all $x,y \in C$,
\begin{equation} \label{def_firmly-non}
d(Tx,Ty) \le d((1-\lambda)x + \lambda Tx, (1-\lambda)y + \lambda Ty).
\end{equation} 
Note that in the above definition if the space is not uniquely geodesic one should fix beforehand for each $z \in C$ a geodesic segment joining $z$ and $Tz$.

If $T$ is $\lambda$-firmly nonexpansive for every $\lambda \in (0,1)$, where the choice of segments joining $z$ to $Tz$ for $z \in C$ does not depend on $\lambda$, then $T$ is called {\it firmly nonexpansive}.

Note that in general (\ref{def_firmly-non}) does not necessarily imply nonexpansivity. However, condition (\ref{def_firmly-non}) implies that $d(T^2x,Tx) \le d(Tx, x)$ for each $x \in C$. Indeed,
\begin{align*}
d(T^2x, Tx) & \le d((1-\lambda)Tx + \lambda T^2x, (1-\lambda)x + \lambda Tx) \\
& \le  d((1-\lambda)Tx + \lambda T^2x, Tx) +  d(Tx, (1-\lambda)x + \lambda Tx)\\
& = \lambda d(T^2x, Tx) + (1-\lambda)d(Tx,x).
\end{align*}

If the space is Busemann convex then nonexpansivity is guaranteed by (\ref{def_firmly-non}). Besides, in the context of Busemann convex spaces, we have even more for firmly nonexpansive mappings. Let $x,y \in C$ and $\varphi_{x,y} : [0,1] \to \mathbb{R}_+$ be defined by
\[\varphi_{x,y}(\lambda) = d((1-\lambda)x + \lambda Tx, (1-\lambda)y + \lambda Ty).\]
Then $\varphi_{x,y}$ is a convex function and, since by (\ref{def_firmly-non}) $\varphi_{x,y}(1) \le \varphi_{x,y}(\lambda)$ for every $\lambda \in [0,1]$, we have that $\varphi_{x,y}$ is non-increasing. 

In \cite[Proposition 4.3]{AriLeuLop12} it was proved that in the context of Busemann convex spaces, any periodic point of a $\lambda$-firmly nonexpansive mapping is also fixed. However, this result is immediate since by \cite[Corollary 5.3]{AriLeuLop12}, we know that the mapping is asymptotically regular because the orbit of a periodic point is bounded. Hence any periodic point is fixed. Note that this reasoning can also be applied in geodesic spaces where the metric is convex since this setting is enough for \cite[Corollary 5.3]{AriLeuLop12} to hold. In fact it can be proved that in any geodesic space with the betweenness property, any periodic point of a mapping satisfying (\ref{def_firmly-non}) is fixed. We omit the details of the proof since it can be obtained by following an argument similar to that in \cite{AriLeuLop12}.

\begin{proposition}\label{prop_betw-fixed}
Let $X$ be a geodesic space with the betweenness property, $C \subseteq X$ nonempty and $T : C \to C$ a mapping that satisfies (\ref{def_firmly-non}) for some $\lambda \in (0,1)$. Then any periodic point of $T$ is a fixed point of $T$.
\end{proposition}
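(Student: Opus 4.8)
The plan is to argue by contradiction, following the pattern of the proof of Proposition~\ref{prop_betw-averaged}. Suppose $x\in C$ is periodic but $Tx\ne x$, let $m\ge 1$ be minimal with $T^{m+1}x=x$, and write $y_k:=T^kx$. The first step is to record that, by the estimate $d(T^2z,Tz)\le d(Tz,z)$ obtained just above (applied with $z=T^{k-1}x$), the sequence $\bigl(d(y_{k+1},y_k)\bigr)_{k\ge0}$ is non-increasing; since it is $(m+1)$-periodic it is in fact constant, say equal to $\gamma$, and $\gamma>0$ because $x$ is not a fixed point. A short computation from the minimality of $m$ also shows that $y_0,\dots,y_m$ are pairwise distinct while $y_{m+1}=y_0$.

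Next I would introduce, for each $k\ge 1$, the auxiliary point $p_k:=(1-\lambda)y_{k-1}+\lambda y_k$ taken on the geodesic segment joining $y_{k-1}$ to $Ty_{k-1}=y_k$ that is fixed in the definition of (\ref{def_firmly-non}); by construction $d(y_{k-1},p_k)=\lambda\gamma$ and $d(p_k,y_k)=(1-\lambda)\gamma$, so $p_k$ lies between $y_{k-1}$ and $y_k$. Inserting the pair $(y_{k-1},y_k)$ into (\ref{def_firmly-non}) and then passing through $y_k$ gives
\[\gamma=d(y_k,y_{k+1})\le d(p_k,p_{k+1})\le d(p_k,y_k)+d(y_k,p_{k+1})=(1-\lambda)\gamma+\lambda\gamma=\gamma,\]
so all these inequalities are equalities. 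In particular $d(p_k,p_{k+1})=\gamma$ and, since $p_k,y_k,p_{k+1}$ are pairwise distinct, $y_k$ lies between $p_k$ and $p_{k+1}$. This equality-forcing is the conceptual heart of the argument, and it is precisely where condition (\ref{def_firmly-non}) — and not merely nonexpansiveness — enters.

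I would then split into the cases $m=1$ and $m\ge 2$, as in Proposition~\ref{prop_betw-averaged}. If $m=1$: from ``$p_1$ lies between $x=y_0$ and $Tx=y_1$'' and ``$Tx$ lies between $p_1$ and $p_2$'' the betweenness property gives that $Tx$ lies between $x$ and $p_2$, so $d(x,p_2)=d(x,Tx)+d(Tx,p_2)=(1+\lambda)\gamma$; but $p_2=(1-\lambda)Tx+\lambda x$ gives directly $d(x,p_2)=(1-\lambda)\gamma$, a contradiction. If $m\ge 2$: for $1\le k\le m-1$ the points $y_{k-1},y_{k+1}$ are distinct (they lie among $y_0,\dots,y_m$), so one application of the betweenness property to $y_{k-1},p_k,y_k,p_{k+1}$ shows that $y_k$ lies between $y_{k-1}$ and $p_{k+1}$, and a second application to $y_{k-1},y_k,p_{k+1},y_{k+1}$ shows that $y_k$ lies between $y_{k-1}$ and $y_{k+1}$. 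The induced chain form of the betweenness property applied to $y_0,y_1,\dots,y_m$ then forces $y_{m-1}$ to lie between $y_0$ and $y_m$, so that
\[\gamma=d(y_0,y_m)=d(y_0,y_{m-1})+d(y_{m-1},y_m)=d(y_0,y_{m-1})+\gamma,\]
which yields $y_0=y_{m-1}$ and contradicts the pairwise distinctness of $y_0,\dots,y_m$.

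The part that I expect to require the most care is the distinctness bookkeeping rather than any particular computation: every use of the betweenness property needs four pairwise distinct points, so at each invocation I would have to verify that the orbit points and the auxiliary points $p_k$ involved are mutually distinct — which follows from $\lambda\in(0,1)$, $\gamma>0$, and the equalities established above — and I would also have to keep track of the fact that each $p_k$ must be taken on the correct prescribed segment, since the ambient space need not be uniquely geodesic. Apart from this, nothing arises that is not already dealt with in the proof of Proposition~\ref{prop_betw-averaged}.
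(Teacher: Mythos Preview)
Your proposal is correct and follows essentially the approach the paper has in mind: the paper omits the details of this proof, saying only that it can be obtained by an argument similar to that in \cite{AriLeuLop12}, and your argument is precisely the adaptation of the proof of Proposition~\ref{prop_betw-averaged} to mappings satisfying (\ref{def_firmly-non}), with the auxiliary points $p_k=(1-\lambda)y_{k-1}+\lambda y_k$ playing the role that the points $T(T_\lambda^{k-1}x)$ play there. The equality-forcing step $\gamma\le d(p_k,p_{k+1})\le (1-\lambda)\gamma+\lambda\gamma$ and the two successive applications of the betweenness property (followed by the chain form) mirror exactly the structure of that proof, and your bookkeeping of pairwise distinctness and of the prescribed geodesic segments is the right thing to flag as the only delicate point.
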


\subsection{Some examples of metric spaces with the betweenness property}
A metric space $(X,d)$ is called a {\it Ptolemy metric space} if
\[d(x,z)d(y,w) \le d(x,y)d(z,w) + d(x,w)d(y,z) \mbox{ for every } x,y,z,w \in X.\]
The above relation is known as the Ptolemy inequality. This inequality is significant in both the normed and the metric setting. It is known that a normed space is an inner product space if and only if it is a Ptolemy space \cite{Sch52}. CAT$(0)$ spaces are Ptolemy spaces, but geodesic Ptolemy spaces are not necessarily CAT$(0)$ \cite{Foe07}. The Ptolemy inequality proved to play an essential role in, for instance, the study of the boundary at infinity of CAT$(-1)$ spaces \cite{FoeSch11}. 
\begin{proposition} \label{prop_betw-Ptolemy}
Every Ptolemy metric space satisfies the betweenness property.
\end{proposition}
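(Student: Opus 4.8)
The plan is to unwind the definition of the betweenness property and reduce it to a single metric identity, and then to extract that identity from one well-chosen instance of the Ptolemy inequality. So let $x,y,z,w$ be pairwise distinct points with $y$ between $x$ and $z$ and $z$ between $y$ and $w$. Writing $a=d(x,y)$, $b=d(y,z)$, $c=d(z,w)$ and $e=d(x,w)$, the hypotheses say $d(x,z)=a+b$ and $d(y,w)=b+c$, and what must be shown is that $y$ and $z$ both lie between $x$ and $w$. Since the four points are already pairwise distinct, the only remaining content is the two metric equalities $d(x,w)=d(x,y)+d(y,w)$ and $d(x,w)=d(x,z)+d(z,w)$; but after substituting the hypotheses each of these is equivalent to the single identity $e=a+b+c$, so it suffices to prove that. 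The triangle inequality already gives $e\le a+b+c$, so the real task is the reverse inequality $e\ge a+b+c$.

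For that I would apply the Ptolemy inequality to the quadruple $x,y,z,w$ in the form
\[d(x,z)\,d(y,w)\le d(x,y)\,d(z,w)+d(x,w)\,d(y,z),\]
which after substitution becomes $(a+b)(b+c)\le ac+eb$. Expanding the left-hand side gives $ab+ac+b^2+bc\le ac+eb$, that is, $b(a+b+c)\le be$. Since $y\ne z$ we have $b=d(y,z)>0$, so we may divide by $b$ to obtain $a+b+c\le e$, which is exactly the inequality needed.

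Combining the two inequalities yields $e=a+b+c$, hence $d(x,w)=d(x,z)+d(z,w)$ and $d(x,w)=d(x,y)+d(y,z)+d(z,w)=d(x,y)+d(y,w)$; together with the pairwise distinctness of $x,y,z,w$ this says precisely that $z$ lies between $x$ and $w$ and that $y$ lies between $x$ and $w$, which is the betweenness property. I do not expect a genuine obstacle here: the only points requiring a little care are choosing the labelling of the Ptolemy inequality so that the quantity known to be positive, namely $d(y,z)$, appears as a common factor, and then invoking $d(y,z)>0$ to justify the division.
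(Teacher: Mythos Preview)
Your proof is correct and follows essentially the same route as the paper: apply the Ptolemy inequality in the form $d(x,z)\,d(y,w)\le d(x,y)\,d(z,w)+d(x,w)\,d(y,z)$, substitute the betweenness hypotheses, expand, and cancel the common factor $d(y,z)>0$ to obtain $d(x,y)+d(y,z)+d(z,w)\le d(x,w)$. Your write-up is slightly more explicit than the paper's in isolating the division by $b=d(y,z)$ and in invoking the triangle inequality for the reverse bound, but the argument is the same.
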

\begin{proof}
Let $x,y,z,w$ be four pairwise distinct points in a Ptolemy metric space such that $y$ is between $x$ and $z$ and $z$ is between $y$ and $w$. By the Ptolemy inequality we have
\[d(x,z)d(y,w) \le d(x,y)d(z,w) + d(x,w)d(y,z).\]
Thus,
\[\left(d(x,y) + d(y,z)\right) \left(d(y,z) + d(z,w)\right) \le d(x,y)d(z,w) + d(x,w)d(y,z),\]
that is,
\[d(x,y)d(y,z) + d(x,y)d(z,w) + d(y,z)^2 + d(y,z)d(z,w) \le d(x,y)d(z,w) + d(x,w)d(y,z),\]
so,
\[d(x,y) + d(y,z) + d(z,w) \le d(x,w).\]
From this we have on the one hand that $d(x,z) + d(z,w) = d(x,w)$ and on the other that $d(x,y) + d(y,w) = d(x,w)$. This means that $y$ and $z$ both lie between $x$ and $w$.
\end{proof}

The betweenness property holds in Busemann convex spaces (see \cite[Proposition 8.2.4]{Pap05}). Note that geodesic Ptolemy spaces are not necessarily Busemann convex. In fact they are not even uniquely geodesic (see \cite{Foe07}) which shows that in the context of geodesic spaces, the betweenness property does not yield the uniqueness of geodesic segments. However, Busemann convexity implies the CAT$(0)$ condition in the setting of Ptolemy spaces \cite{Foe07}. Other properties of geodesic Ptolemy spaces in connection to metric fixed point theory can be found in \cite{EspNic11, EspNic12}.

We see below that the betweenness property holds in the more general context of geodesic spaces with convex metric.

\begin{proposition}
Every geodesic space with convex metric satisfies the betweenness property.
\end{proposition}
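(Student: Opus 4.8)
The plan is to deduce the betweenness property from a single application of the convexity of the metric, together with the triangle inequality. Let $x,y,z,w$ be four pairwise distinct points with $y$ between $x$ and $z$ and $z$ between $y$ and $w$, and set $a=d(x,y)$, $b=d(y,z)$, $c=d(z,w)$. All three numbers are strictly positive since the points are pairwise distinct, and the hypotheses say precisely that $d(x,z)=a+b$ and $d(y,w)=b+c$. The whole statement reduces to proving the single equality $d(x,w)=a+b+c$: once this is available, regrouping it as $d(x,w)=(a+b)+c=d(x,z)+d(z,w)$ shows that $z$ lies between $x$ and $w$, and regrouping it as $d(x,w)=a+(b+c)=d(x,y)+d(y,w)$ shows that $y$ lies between $x$ and $w$, the required distinctness of $y$ and $z$ from $x$ and $w$ being part of the hypothesis.

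To get $d(x,w)\ge a+b+c$, I would use that $z$ lies between $y$ and $w$, hence lies on a geodesic segment $[y,w]$ and can be written $z=(1-t_0)y+t_0 w$ with $t_0=d(y,z)/d(y,w)=b/(b+c)\in(0,1)$. Applying convexity of the metric with base point $x$ along this segment gives
\[ d(x,z)\le (1-t_0)\,d(x,y)+t_0\,d(x,w). \]
Substituting $d(x,z)=a+b$ and $d(x,y)=a$, rearranging, and using $b/t_0=b+c$ yields $d(x,w)\ge a+b+c$. The reverse inequality $d(x,w)\le a+b+c$ is just the triangle inequality along $x,y,z,w$, so $d(x,w)=a+b+c$ and the proof is finished.

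I do not anticipate a real obstacle; the argument is essentially a one-line estimate. The only point worth a moment's care is that the convexity inequality is being invoked at the point $z$, but this is exactly the configuration the definition of a convex metric covers, since $z$ is handed to us as a point of the geodesic segment $[y,w]$, so no choice-of-segment ambiguity and no uniqueness-of-geodesics assumption enter. It is also worth noting that, since every Busemann convex space has convex metric, this proposition subsumes the fact (recalled above) that the betweenness property holds in Busemann convex spaces.
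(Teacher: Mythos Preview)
Your proof is correct and follows essentially the same approach as the paper: write $z=(1-t_0)y+t_0 w$ on a geodesic segment $[y,w]$, apply the convexity inequality with base point $x$, and rearrange to obtain $d(x,w)\ge d(x,y)+d(y,z)+d(z,w)$, from which both betweenness conclusions follow. The only cosmetic difference is that the paper first isolates $d(x,y)+d(y,w)\le d(x,w)$ and then derives the second betweenness from the first, whereas you go directly to the single equality $d(x,w)=a+b+c$; the underlying computation is identical.
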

\begin{proof}
Let $x,y,z,w$ be four pairwise distinct points such that $y$ is between $x$ and $z$ and $z$ is between $y$ and $w$. Suppose $z =(1-r)y + rw$, where $r \in (0,1)$. Then, $d(x,z) \le (1-r) d(x,y) + rd(x,w)$, from where
\[r d(x,w) \ge d(x,z) - d(x,y) + rd(x,y) = d(y,z) + r d(x,y) = r d(y,w) + r d(x,y).\]
Hence, $d(x,y) + d(y,w) \le d(x,w)$, which implies that $y$ lies between $x$ and $w$. But then,
\[d(x,w) = d(x,y) + d(y,w) = d(x,y) + d(y,z) + d(z,w) = d(x,z) + d(z,w),\]
which yields that $z$ also lies between $x$ and $w$.
\end{proof}

Let $\kappa \in \mathbb{R}$. CAT$(\kappa)$ spaces are defined in terms of comparisons with the model spaces $M_\kappa^2$ similarly to CAT$(0)$ spaces. We denote the {\it diameter of $M_\kappa ^2$} by $D_\kappa$. More precisely, for $\kappa > 0$, $D_\kappa = \pi/\sqrt{\kappa}$ and for $\kappa \le 0$, $D_\kappa = \infty$. In any CAT$(\kappa)$ space $X$ having $x_0 \in X$, the restriction of $x \mapsto d(x,x_0)$ to the open ball centered at $x_0$ and of radius $D_\kappa/2$ is convex. Therefore, on sets of diameter less than $D_\kappa/2$ the betweenness property is satisfied. For more details about these spaces and related topics one can consult Bridson and Haefliger \cite{Bri99}.  

We recall next some notions needed below. A geodesic path $c : [0,l] \to X$ is {\it extendable} beyond the point $c(l)$ if $c$ is a restriction of a geodesic path $c':[0, l'] \to X$ with $l' > l$. We say that two {\it geodesics bifurcate} if they have a common endpoint and coincide on an interval, but one is not an extension of the other. Alexandrov spaces of curvature bounded below cannot have bifurcating geodesics. We refer the reader to the paper of Burago, Gromov and Perelman \cite{BurGroPer92} and to Plaut \cite{Plaut02} for a detailed discussion on the geometry of Alexandrov spaces.

\begin{proposition}
Let $X$ be a geodesic space without bifurcating geodesics and assume that every geodesic path is extendable. Then $X$ satisfies the betweenness property.
\end{proposition}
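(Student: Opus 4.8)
The plan is to show first that the two hypotheses force $X$ to be \emph{uniquely geodesic}, and then to run a supremum argument based on extendability. Fix pairwise distinct $x,y,z,w$ with $y$ between $x$ and $z$ and $z$ between $y$ and $w$, and set $a=d(x,y)$, $b=d(y,z)$, $c=d(z,w)$; there is a geodesic from $x$ to $z$ through $y$ (so $d(x,z)=a+b$) and one from $y$ to $w$ through $z$ (so $d(y,w)=b+c$). Everything reduces to establishing $d(x,w)=a+b+c$, because then $d(x,w)=d(x,y)+d(y,w)=d(x,z)+d(z,w)$ and, the four points being pairwise distinct, this says precisely that $y$ and $z$ both lie between $x$ and $w$.

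\emph{Uniqueness.} Let $\gamma_0,\gamma_1$ be geodesics with the same endpoints $p,q$, parametrized on a common interval $[0,L]$. Using extendability, continue each beyond $q$; shrinking if needed, we get geodesics $\widehat\gamma_0,\widehat\gamma_1\colon[0,L+\delta]\to X$ restricting to $\gamma_0,\gamma_1$ on $[0,L]$. With $\tau=\widehat\gamma_0|_{[L,L+\delta]}$ and $m=\widehat\gamma_0(L+\delta)$, the concatenation $\eta$ of $\gamma_1$ and $\tau$ is again a geodesic: for $t\le L\le t'$ the triangle inequality bounds $d(\eta(t),\eta(t'))$ above by $t'-t$, while anchoring at $p$ (both $\gamma_1$ and $\widehat\gamma_0$ issue from $p$) bounds it below by $d(p,\widehat\gamma_0(t'))-d(p,\gamma_1(t))=t'-t$. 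Now $\eta$ and $\widehat\gamma_1$ are geodesics on $[0,L+\delta]$ sharing the endpoint $p$ and coinciding on $[0,L]$, so by the no-bifurcation hypothesis one extends the other; having equal domains, $\eta=\widehat\gamma_1$. Hence $\widehat\gamma_1$ agrees with $\widehat\gamma_0$ on $[L,L+\delta]$ and $\widehat\gamma_1(L+\delta)=m$, so $\widehat\gamma_0$ and $\widehat\gamma_1$ are geodesics on $[0,L+\delta]$ sharing the endpoint $m$ and coinciding on $[L,L+\delta]$; the same principle gives $\widehat\gamma_0=\widehat\gamma_1$, and restricting to $[0,L]$ yields $\gamma_0=\gamma_1$.

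\emph{Supremum argument.} By uniqueness, let $g_1\colon[0,a+b]\to X$ and $g_2\colon[0,b+c]\to X$ be \emph{the} geodesics from $x$ to $z$ (with $g_1(a)=y$) and from $y$ to $w$ (with $g_2(b)=z$); the portions of $g_1$ and of $g_2$ joining $y$ to $z$ are both the unique geodesic $[y,z]$, so $g_1(a+t)=g_2(t)$ for $t\in[0,b]$. If $h$ is any geodesic extending $g_1$, of length $a+b+\varepsilon$ with $0<\varepsilon\le c$, then the part of $h$ from $y$ onward and $g_2$ are geodesics issuing from $y$ that coincide along $[y,z]$, so by no-bifurcation $g_2$ extends the former (they coincide when $\varepsilon=c$); in particular $h$ ends at $g_2(b+\varepsilon)$, whence $d(x,g_2(b+\varepsilon))=a+b+\varepsilon$. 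Let $S$ be the set of $\varepsilon\ge 0$ for which $g_1$ extends to a geodesic of length $a+b+\varepsilon$. Then $S$ contains $0$, is downward closed, and its supremum $E$ is not attained (an attaining geodesic could be extended further), so $S=[0,E)$ with $E\in(0,\infty]$. If $E>c$, take $\varepsilon=c$: the resulting extension of $g_1$ ends at $g_2(b+c)=w$, giving $d(x,w)=a+b+c$. If $E=c$, let $\varepsilon\uparrow c$: then $g_2(b+\varepsilon)\to w$ and $d(x,g_2(b+\varepsilon))=a+b+\varepsilon\to a+b+c$, again $d(x,w)=a+b+c$. If $E<c$, the extensions of $g_1$ of lengths $<a+b+E$, coherent by uniqueness, glue to a geodesic on $[0,a+b+E)$ sending $a+b+\varepsilon$ to $g_2(b+\varepsilon)$; assigning it the value $g_2(b+E)$ at the endpoint — a point that genuinely lies on $g_2$, so no completeness of $X$ is invoked — yields a geodesic on $[0,a+b+E]$ extending $g_1$, contradicting $E\notin S$. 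Thus $d(x,w)=a+b+c$, as required.

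The crux is the uniqueness step, and within it the observation that splicing $\gamma_1$ with an extension of $\gamma_0$ past $q$ again produces a geodesic; this coherence is exactly what lets the no-bifurcation hypothesis be applied at the new far endpoint $m$. In the supremum argument the only delicate point is that $X$ is not assumed complete, so the limiting extension of $g_1$ exists only because its candidate endpoint $g_2(b+E)$ is already supplied by the constructed geodesic $g_2$.
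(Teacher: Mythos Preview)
Your proof is correct and follows essentially the same strategy as the paper's: both arguments extend the geodesic $[x,z]$ as far as possible along $[z,w]$, using extendability to push further and no-bifurcation to force the extension to track $[y,w]$. The paper compresses this into a single maximal-$z'$ argument, whereas you organize it as a supremum with three cases; you are also more careful in establishing unique geodicity explicitly from the two hypotheses, a point the paper asserts for the segment $[y,z]$ with less justification.
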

\begin{proof}
Let $x,y,z,w \in X$ be pairwise distinct such that $y$ is between $x$ and $z$ and $z$ is between $y$ and $w$. Denote by $[x,z]$ a geodesic segment that joins $x$ and $z$ and contains $y$ and by $[y,w]$ a geodesic segment joining $y$ and $w$ and containing $z$. 
Since $X$ has no bifurcating geodesics it follows that $y$ and $z$ are joined by a unique geodesic segment. Consider $z' \in [z,w] \subseteq [y,w]$ such that $d(x,z) + d(z,z') = d(x,z')$ and $d(z,z') \in [0, d(z,w)]$ is maximal. Suppose $d(z,z') < d(z,w)$. Let $c : [0,l] \to X$ be the geodesic path whose image is $[x,z] \cup [z,z']$ with $c(l) = z'$. Then there exist $\varepsilon > 0$ and a geodesic path $c' : [0, l+ \varepsilon] \to X$ such that $c'\vert_{[0,l]} = c$. Pick $0 < \delta < \min\{\varepsilon, d(z', w)\}$ and $z_\delta \in [z', w]$ for which $d(z',z_\delta) = \delta$. Because the space has no bifurcating geodesics we have that $[y,z_\delta]$ and $c'\left([d(x,y), l+\delta]\right)$ coincide and $z_\delta = c'(l + \delta)$, which contradicts the maximality of $z'$. Hence $z' = w$ and we are done.
\end{proof}

\begin{corollary}\label{cor_betw-Alex}
Every Alexandrov space of curvature bounded below for which all geodesic paths are extendable satisfies the betweenness property.
\end{corollary}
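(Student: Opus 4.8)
The plan is to reduce the statement directly to the preceding proposition, the one asserting that a geodesic space without bifurcating geodesics in which every geodesic path is extendable has the betweenness property. That proposition has two hypotheses: extendability of geodesics, which is assumed in the corollary, and absence of bifurcating geodesics. So the only thing that requires justification is that an Alexandrov space of curvature bounded below cannot have bifurcating geodesics.

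This non-branching property is a classical fact about spaces of curvature bounded below, already recorded in the discussion above and established in \cite{BurGroPer92} and \cite{Plaut02}. I would simply invoke it, but if a word of explanation is wanted, the mechanism is the monotonicity of comparison angles: if two geodesics issue from a common endpoint and coincide along an initial subinterval, the comparison angle between the two branches at a point of the shared subinterval is zero, and the Toponogov-type convexity condition then forces the two branches to agree on a strictly larger interval; iterating this shows they agree until one terminates, so one must be an extension of the other, contradicting bifurcation.

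With the non-branching property in hand, $X$ satisfies both hypotheses of the previous proposition, and the conclusion that $X$ has the betweenness property is immediate. There is essentially no new content beyond citing the non-branching theorem and applying the proposition.

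The only point requiring a little care — and the closest thing to an obstacle — is a bookkeeping matter about definitions: one should make sure that the version of ``Alexandrov space of curvature bounded below'' being used here is the one for which the non-branching result is stated in \cite{BurGroPer92, Plaut02} (in particular whether completeness or local compactness is taken as part of the definition). Once the definitions are aligned, nothing further is needed.
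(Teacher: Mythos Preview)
Your proposal is correct and matches the paper's approach exactly: the corollary is stated without proof because it follows immediately from the preceding proposition together with the fact, recorded just before that proposition with references to \cite{BurGroPer92} and \cite{Plaut02}, that Alexandrov spaces of curvature bounded below have no bifurcating geodesics. Your added sketch of the non-branching argument and the remark about aligning definitions are more than the paper provides, but they are appropriate and harmless.
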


\section{Effective results on the asymptotic behavior of Picard iterates}\label{sect_effective}

\subsection{Averaged mappings}

In this section we gather some facts on averaged mappings in the context of geodesic spaces. We study next the asymptotic behavior of Picard iterates for averaged mappings in geodesic spaces giving analogues of the results obtained in Banach spaces by Baillon, Bruck and Reich \cite{BaBrRe79} and in the Hilbert ball by Reich \cite{Rei85}. 

\begin{lemma}\label{lemma_averaged}
Let $X$ be a geodesic space with convex metric and $C \subseteq X$ nonempty and convex. Suppose that $T_\lambda : C \to C$ is averaged. Then, for every $x,y \in C$,
\[d(T_\lambda x,T_\lambda y) \le (1-\lambda) d(T_\lambda x,y) + \lambda(1-\lambda)d(x,T_\lambda y) + (1-\lambda)^2d(y,T_\lambda y) + \lambda^2d(x,y).\]
\end{lemma}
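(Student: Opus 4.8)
The plan is to obtain the four-term bound by peeling off one summand at a time, each time invoking only convexity of the metric (in whichever argument is convenient, which is legitimate since $d$ is symmetric) together with nonexpansivity of $T$. The single structural fact that makes everything fit is that $T_\lambda y = (1-\lambda)y + \lambda Ty$ sits on the fixed geodesic segment joining $y$ and $Ty$, so that $d(y,T_\lambda y) = \lambda\, d(y,Ty)$ and $d(T_\lambda y, Ty) = (1-\lambda)\, d(y,Ty)$; this lets us trade a distance to $Ty$ for a distance to $T_\lambda y$ at the very end.

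Concretely, I would first expand $T_\lambda y$ and apply convexity of $d$ in its second slot to get $d(T_\lambda x, T_\lambda y) \le (1-\lambda)\, d(T_\lambda x, y) + \lambda\, d(T_\lambda x, Ty)$; the first summand is already one of the target terms. Next I would expand $T_\lambda x = (1-\lambda)x + \lambda Tx$ and use convexity in the first slot, followed by $d(Tx,Ty) \le d(x,y)$, to obtain $\lambda\, d(T_\lambda x, Ty) \le \lambda(1-\lambda)\, d(x,Ty) + \lambda^2 d(x,y)$; this yields the term $\lambda^2 d(x,y)$ and leaves $\lambda(1-\lambda)\, d(x,Ty)$ to dispose of. Finally, the triangle inequality gives $d(x,Ty) \le d(x,T_\lambda y) + d(T_\lambda y, Ty)$, and substituting $d(T_\lambda y, Ty) = (1-\lambda)\, d(y,Ty) = \tfrac{1-\lambda}{\lambda}\, d(y,T_\lambda y)$ and multiplying through by $\lambda(1-\lambda)$ produces exactly $\lambda(1-\lambda)\, d(x,T_\lambda y) + (1-\lambda)^2 d(y,T_\lambda y)$. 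Assembling the three estimates gives the claimed inequality.

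I do not expect a real obstacle: the whole argument is bookkeeping. The only points worth stating carefully are that ``convex metric'' is used in both arguments (justified by symmetry of $d$), that nonexpansivity of $T$ is invoked exactly once, and that $T_\lambda z$ must throughout refer to the same fixed geodesic segment joining $z$ and $Tz$ (relevant only when $X$ is not uniquely geodesic) — it is in the last substitution, where the ratio in which $T_\lambda y$ divides $[y,Ty]$ is used, that this convention matters.
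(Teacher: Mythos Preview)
Your proposal is correct and follows essentially the same route as the paper's proof: expand $T_\lambda y$ via convexity, then expand $T_\lambda x$ via convexity and apply nonexpansivity of $T$, then use the triangle inequality through $T_\lambda y$ together with $d(T_\lambda y, Ty)=(1-\lambda)d(y,Ty)=\tfrac{1-\lambda}{\lambda}d(y,T_\lambda y)$ to convert $\lambda(1-\lambda)d(x,Ty)$ into the two remaining target terms. The paper carries out exactly these three steps in the same order.
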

\begin{proof}
Let $x,y \in C$. Then,
\begin{align*}
d(T_\lambda x,T_\lambda y) &= d(T_\lambda x, (1-\lambda)y + \lambda Ty) \le (1-\lambda)d(T_\lambda x, y) + \lambda d(T_\lambda x, Ty)\\
& = (1-\lambda)d(T_\lambda x, y) + \lambda d((1-\lambda)x + \lambda Tx, Ty)\\ 
& \le (1-\lambda)d(T_\lambda x, y) + \lambda(1-\lambda) d(x, Ty) + \lambda^2 d(Tx,Ty)\\
& \le (1-\lambda)d(T_\lambda x, y) + \lambda(1-\lambda) d(x, T_\lambda y) + \lambda(1-\lambda)d(T_\lambda y, Ty) + \lambda^2 d(x,y)\\
& = (1-\lambda) d(T_\lambda x,y) + \lambda(1-\lambda)d(x,T_\lambda y) + (1-\lambda)^2d(y,T_\lambda y) + \lambda^2d(x,y).
\end{align*}
\end{proof}

We already know by \cite[Proposition 2]{GoeKir83} that if $(T_\lambda^n x)$ is bounded then it is asymptotically regular at $x \in C$. Moreover, if $T_\lambda$ is also nonexpansive, then all orbits will be bounded and so $T_\lambda$ will be asymptotically regular. Additionally, we have the following.
\begin{theorem}
Let $X$ be a geodesic space with convex metric and $C \subseteq X$ nonempty and convex. Suppose that $T_\lambda : C \to C$ is averaged and nonexpansive. Then for every $x \in C$ and $k \in \mathbb{N}$,
\[\lim_{n \to \infty}d(T_\lambda^{n+1}x,T_\lambda^n x) = \frac{1}{k} \lim_{n \to \infty} d(T_\lambda^{n+k}x, T_\lambda^n x) = \lim_{n \to \infty}\frac{d(T_\lambda^n x, x)}{n} = r_C(T_\lambda),\]
where $r_C(T) =\inf\{d(x,Tx) : x\in C\}$ is the minimal displacement of  $T$.
\end{theorem}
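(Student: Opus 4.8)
The plan is to show that all four quantities lie in the interval $[c,\ell]$, where $\ell:=\lim_n d(T_\lambda^{n+1}x,T_\lambda^n x)$ and $c:=\lim_n d(T_\lambda^n x,x)/n$ are the extreme ones (once shown to exist), and then to collapse that interval to a point. First I would record the monotonicity facts. Writing $a_n=d(T_\lambda^{n+1}x,T_\lambda^n x)$, the inequality $d(T_\lambda^2 y,T_\lambda y)\le d(T_\lambda y,y)$ established in Section~\ref{sect_betw}, applied at $y=T_\lambda^{n-1}x$, shows that $(a_n)$ is non-increasing, so $\ell:=\lim_n a_n$ exists, and since $a_n=d(T_\lambda(T_\lambda^n x),T_\lambda^n x)\ge r_C(T_\lambda)$ we get $\ell\ge r_C(T_\lambda)$. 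Likewise, nonexpansivity of $T_\lambda$ makes $n\mapsto d(T_\lambda^{n+k}x,T_\lambda^n x)$ non-increasing, so $\beta_k:=\tfrac1k\lim_n d(T_\lambda^{n+k}x,T_\lambda^n x)$ exists; by the triangle inequality $d(T_\lambda^{n+k}x,T_\lambda^n x)\le\sum_{j=0}^{k-1}a_{n+j}\le k a_n$, hence $\beta_k\le\ell$. Finally, nonexpansivity of each iterate $T_\lambda^m$ yields subadditivity of $s_n:=d(T_\lambda^n x,x)$, i.e.\ $s_{n+m}\le s_n+s_m$, so by Fekete's lemma $c:=\lim_n s_n/n=\inf_n s_n/n$ exists.

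Next I would assemble the easy inequalities. From $s_n\le\sum_{j=0}^{n-1}a_j$ and Cesàro summability, $c\le\ell$. From $s_{nk}\le\sum_{j=0}^{n-1}d(T_\lambda^{(j+1)k}x,T_\lambda^{jk}x)$, where each summand is $\le s_k$ and tends to $k\beta_k$ as $j\to\infty$, Cesàro summability gives $kc=\lim_n s_{nk}/n\le k\beta_k$, i.e.\ $c\le\beta_k$; moreover $k\beta_k=\inf_n d(T_\lambda^{n+k}x,T_\lambda^n x)\le s_k$, so $\beta_k\le s_k/k\to c$ and therefore $\lim_k\beta_k=c$. A basepoint-independence observation, $|s_n(y)-s_n(z)|\le 2d(y,z)$ for all $y,z\in C$, shows that the value of $c$ does not depend on the starting point; applying $c=c(y)\le\tfrac1n\sum_{j<n}d(T_\lambda^{j+1}y,T_\lambda^j y)\le d(y,T_\lambda y)$ for every $y\in C$ and passing to the infimum over $y$ gives $c\le r_C(T_\lambda)$. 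At this stage $c\le\beta_k\le\ell$ and $c\le r_C(T_\lambda)\le\ell$, so it only remains to prove $\ell\le c$.

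The main obstacle is exactly this last inequality: a priori the orbit could ``waste'' displacement by folding back, so that $d(T_\lambda^n x,x)$ grows strictly slower than $n\ell$. To rule this out I would adapt the quantitative argument behind \cite[Proposition 2]{GoeKir83}: using convexity of the metric, one proves by induction on $k$ a Goebel--Kirk-type inequality of the form
\[ d(T_\lambda^m x,T_\lambda^{m+1}x)\ \le\ \tfrac1k\,d(T_\lambda^m x,T_\lambda^{m+k}x)\ +\ C_{k,\lambda}\bigl(d(T_\lambda^m x,T_\lambda^{m+1}x)-d(T_\lambda^{m+k}x,T_\lambda^{m+k+1}x)\bigr), \]
valid for all $m$, where the constant $C_{k,\lambda}$ depends only on $k$ and $\lambda$ (a factor of type $(1-\lambda)^{-k}$ is harmless, since $k$ is fixed). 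Letting $m\to\infty$ with $k$ fixed, the parenthesis tends to $0$ because both displacement terms converge to $\ell$, and we obtain $\ell\le\beta_k$; together with $\beta_k\le\ell$ this gives $\beta_k=\ell$ for every $k$, and since $\beta_k\to c$ we conclude $c=\ell$. Then $c\le r_C(T_\lambda)\le\ell=c$ forces $c=r_C(T_\lambda)=\ell$, and $c\le\beta_k\le\ell$ forces $\tfrac1k\lim_n d(T_\lambda^{n+k}x,T_\lambda^n x)=\beta_k=r_C(T_\lambda)$, which is the claim.

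I expect the delicate point to be precisely the induction step establishing the displayed Goebel--Kirk-type inequality in a space with merely convex metric (rather than in a $CAT(0)$ or $W$-hyperbolic space), since one must control the effect of folding using only the single convexity inequality $d(z,(1-t)u+tv)\le(1-t)d(z,u)+td(z,v)$ together with nonexpansivity of $T$; everything else reduces to routine bookkeeping with monotone and subadditive sequences.
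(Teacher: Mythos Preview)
Your proposal is correct and follows essentially the same route as the paper. The paper's proof is only a pointer: it says to combine Lemma~\ref{lemma_averaged} with \cite[Lemma~5.4]{AriLeuLop12} and mimic the firmly nonexpansive argument, or alternatively adapt \cite{Rei85}. Your write-up fleshes out precisely this scheme---monotonicity of $(a_n)$, subadditivity of $(s_n)$, the sandwich $c\le\beta_k\le\ell$ and $c\le r_C(T_\lambda)\le\ell$, and then the Goebel--Kirk-type inequality to force $\ell\le\beta_k$. The inequality you display is exactly the analogue for averaged mappings of the paper's Lemma~\ref{lemma-fn} for firmly nonexpansive mappings, and the paper's intended induction step uses Lemma~\ref{lemma_averaged} (which you derive in spirit from convexity plus nonexpansivity of $T$) together with \cite[Lemma~5.4]{AriLeuLop12}; so the ``delicate point'' you flag is precisely where the paper defers to those references.
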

\begin{proof}
The result can be proved by applying Lemma \ref{lemma_averaged} and \cite[Lemma 5.4]{AriLeuLop12} using reasoning similar to that adopted in the case of firmly nonexpansive mappings which was discussed in \cite{AriLeuLop12}. A slightly different proof can be obtained by an immediate adaptation of the proof given in \cite{Rei85} for the Hilbert ball.
\end{proof}

The above result shows that if $T_\lambda$ is asymptotically regular at some point $x \in C$ then $T_\lambda$ is asymptotically regular which in turn is equivalent to $r_C(T_\lambda) = 0$. 

In Banach spaces, conditions for the convergence of the sequence $(T^n x/ n)$ when the mapping $T$ is nonexpansive are discussed in \cite{Paz71, Rei73, Rei80, KohNey81, Rei81, Rei82, PlaRei83}.

We focus next on rates of asymptotic regularity for averaged mappings. Let $C\subseteq X$ be a nonempty and convex subset of a geodesic space and $T : C \to C$ be nonexpansive. The {\it Krasnoselski iteration} \cite{Kra55, Sch57} $(x_n)$ starting at $x \in C$ is defined as in the case of normed spaces by
\[x_0 : = x, \quad x_{n+1} := (1 - \lambda)x_n + \lambda Tx_n,\]
where $\lambda \in (0,1)$. Note that the Picard iteration $(T_\lambda^n x )$ of the averaged mapping $T_\lambda : C \to C$, $T_\lambda(x) = (1-\lambda)x + \lambda Tx$ is in fact the Krasnoselski iteration starting at $x \in C$ and 
\[d\left(T_\lambda^n x, T_\lambda^{n+1} x\right) = \lambda d\left(T_\lambda^n x, T(T_\lambda^{n} x\right)) = \lambda d(x_n, T x_n).\]
This means that asymptotic regularity of the mapping $T_\lambda$ is equivalent to the fact that for each starting point $x \in C$, $\lim_{n \to \infty}d(x_n, T x_n) = 0$. Hence, the study of a rate of asymptotic regularity for the averaged mapping $T_\lambda$ reduces to computing a rate of asymptotic regularity for the Krasnoselski iteration of the nonexpansive mapping $T$. In the case of normed spaces, a uniform and quadratic rate was established by  Baillon and Bruck in \cite{BaiBru96} using a difficult computer aided proof. Afterwards, for the case $\lambda = 1/2$ a simple proof was given in \cite{Bru96}. Rates of asymptotic regularity for the more general Krasnoselski-Mann iteration were computed using methods of proof mining in the setting of normed spaces by Kohlenbach \cite{Koh01} and in the setting of hyperbolic spaces by Kohlenbach and Leu\c stean \cite{KohLeu03} by analyzing a result of Borwein, Reich and Shafrir \cite{BorReiSha92}. Although the main result in \cite{KohLeu03} was proved in hyperbolic spaces in the sense of \cite{ReiSha90}, the proof goes through for the setting of geodesic spaces with convex metric. We include below a rate of asymptotic regularity for averaged mappings in geodesic spaces with convex metric which is an immediate consequence of \cite[Corollary 3.18]{KohLeu03}. This rate only depends on $\varepsilon$, on an upper bound $b$ on the diameter of the set $C$ and on $\lambda$.

\begin{theorem} 
Let $(X,d)$ be a geodesic space with convex metric, $C \subseteq X$ nonempty convex and bounded with diameter $d_C \le b$. Suppose $T_\lambda : C \to C$ is averaged. Then,
\[\forall x \in C, \forall \varepsilon > 0, \forall n \ge \Phi(\varepsilon, b, \lambda), \quad d\left(T_\lambda^n x, T_\lambda^{n+1} x\right) \le \varepsilon,\]
where
\[\Phi(\varepsilon, b, \lambda) := KM\left\lceil 2 b e^{K(M+1)}\right\rceil,\]
with 
\[ K = \max\left\{\left\lceil \frac{1}{\lambda}\right\rceil, \left\lceil \frac{1}{1-\lambda}\right\rceil \right\}, \quad M = \left\lceil\frac{\lambda(1 + 2b)}{\varepsilon}\right\rceil.\]
\end{theorem}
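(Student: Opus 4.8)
The plan is to obtain the stated bound as a direct corollary of the quantitative analysis of the Borwein--Reich--Shafrir theorem due to Kohlenbach and Leu\c stean, exploiting the fact, recorded in the discussion preceding the statement, that the Picard iterates of an averaged mapping form a Krasnoselski iteration of a nonexpansive mapping. Concretely, write $T_\lambda x = (1-\lambda)x + \lambda Tx$ with $T : C \to C$ nonexpansive, set $x_0 := x$ and $x_{n+1} := (1-\lambda)x_n + \lambda T x_n$, so that $T_\lambda^n x = x_n$ and, as already noted,
\[d\left(T_\lambda^n x, T_\lambda^{n+1} x\right) = \lambda\, d(x_n, T x_n) \qquad (n \in \mathbb{N}).\]
Hence $d(T_\lambda^n x, T_\lambda^{n+1} x) \le \varepsilon$ holds as soon as $d(x_n, T x_n) \le \varepsilon/\lambda$, and it suffices to produce a rate of asymptotic regularity for the Krasnoselski iteration of $T$ for the target error $\varepsilon/\lambda$.

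Next I would observe that $(x_n)$ is the Krasnoselski--Mann iteration of $T$ with the constant parameter sequence $\lambda_n := \lambda \in (0,1)$, that $C$ is bounded with diameter $d_C \le b$, and that---although \cite[Corollary 3.18]{KohLeu03} is formulated for hyperbolic spaces in the sense of \cite{ReiSha90}---its proof only uses convexity of the metric and therefore carries over verbatim to any geodesic space with convex metric. Invoking that corollary with $\varepsilon/\lambda$ in place of $\varepsilon$, with $b$ as an upper bound on the diameter of $C$ and with parameter $\lambda$, yields: for every $x \in C$ and every $n \ge KM\lceil 2b\, e^{K(M+1)}\rceil$ one has $d(x_n, T x_n) \le \varepsilon/\lambda$, where $K = \max\{\lceil 1/\lambda\rceil, \lceil 1/(1-\lambda)\rceil\}$ and $M = \lceil (1+2b)/(\varepsilon/\lambda)\rceil = \lceil \lambda(1+2b)/\varepsilon\rceil$. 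Combining this with the displayed identity gives $d(T_\lambda^n x, T_\lambda^{n+1} x) = \lambda\, d(x_n, T x_n) \le \varepsilon$ for all $n \ge \Phi(\varepsilon, b, \lambda)$, which is exactly the assertion.

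I do not expect a genuine obstacle: all the quantitative content is already contained in \cite[Corollary 3.18]{KohLeu03}, and the proof reduces to identifying the Picard iteration of $T_\lambda$ with a Krasnoselski iteration and performing the elementary substitution $\varepsilon \mapsto \varepsilon/\lambda$ forced by the factor $\lambda$ in $d(T_\lambda^n x, T_\lambda^{n+1} x) = \lambda\, d(x_n, T x_n)$. The one point to verify carefully is that this substitution turns the Kohlenbach--Leu\c stean bound into precisely the $\Phi$ displayed in the statement; this is routine bookkeeping with the ceiling functions, noting in particular that $\lceil (1+2b)/(\varepsilon/\lambda)\rceil = \lceil \lambda(1+2b)/\varepsilon\rceil = M$ while $K$ is unaffected.
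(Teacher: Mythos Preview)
Your proposal is correct and matches the paper's approach exactly: the paper presents this theorem as ``an immediate consequence of \cite[Corollary 3.18]{KohLeu03}'' (after noting that the proof there carries over to geodesic spaces with convex metric), and your argument simply spells out that consequence via the identification of the Picard iterates of $T_\lambda$ with the Krasnoselski iteration of $T$ and the substitution $\varepsilon \mapsto \varepsilon/\lambda$.
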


The above rate is exponential in $1/\varepsilon$. Rates of asymptotic regularity for the Krasnoselski-Mann iteration were further studied in particular hyperbolic spaces (namely in $UCW$-hyperbolic spaces) by Leu\c stean \cite{Leu07}. Note that instead of $UCW$-hyperbolic spaces one can also consider the more general context of uniformly convex geodesic spaces with convex metric that admit a monotone modulus of uniform convexity since the proof carries over to this setting with no changes. In particular, the results proved in \cite{Leu07} guarantee a quadratic rate of asymptotic regularity (in $1/\varepsilon$) for CAT$(0)$ spaces which again only depends on $\varepsilon$, on an upper bound $b$ on the diameter of the set $C$ and on $\lambda$.

\subsection{Firmly nonexpansive mappings} \label{sect-effective}
In this section we mainly focus on providing a rate of asymptotic regularity for firmly nonexpansive mappings in the setting of geodesic spaces with convex metric. 

Let $(X,d)$ be a $W$-hyperbolic space, $C \subseteq X$ nonempty and suppose $T:C \to C$ is $\lambda$-firmly nonexpansive. It was proved in \cite{AriLeuLop12} that for every $x \in C$,
\[\lim_{n \to \infty} d\left(T^n x, T^{n+1} x\right) = r_C(T).\]
In fact, this holds in the context of geodesic spaces with convex metric. This property is the counterpart in the geodesic setting of a very important result given by Reich and Shafrir \cite{ReiSha87} in Banach spaces. Using methods of proof mining, we give in the sequel a quantitative version of this result. 
\begin{theorem} \label{th-fn-rate}
Let $(X,d)$ be a geodesic space with convex metric, $C \subseteq X$ nonempty and $T : C \to C$ $\lambda$-firmly nonexpansive.
Let $x,y \in C$ and $b \ge \max\{d(x,y), d(x,Tx)\}$. Then,
\[\forall \varepsilon > 0, \forall n \ge \Phi(\varepsilon, b, \lambda), \quad d(T^n x, T^{n+1} x) \le d(y,Ty) + \varepsilon,\]
where
\begin{equation}\label{th_fn-rate-exp}
\Phi(\varepsilon, b, \lambda) := M\left\lceil \frac{2 b (1 + e^{KM})}{\varepsilon}\right\rceil,
\end{equation}
with 
\[ K = \left\lceil \frac{1}{\lambda}\right\rceil \quad \text{and} \quad M = \left\lceil\frac{4b}{\varepsilon}\right\rceil.\]
\end{theorem}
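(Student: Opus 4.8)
\emph{Proof proposal.} Write $x_n := T^n x$, $y_n := T^n y$, $a_n := d(x_n, x_{n+1})$ and $c := d(y, Ty)$. The first ingredient is monotonicity: the inequality $d(T^2 z, Tz) \le d(Tz, z)$ established above for any $z \in C$ from (\ref{def_firmly-non}) shows that both $(a_n)_n$ and $(d(y_n, y_{n+1}))_n$ are non-increasing; in particular $a_0 = d(x,Tx) \le b$, $d(y_n, y_{n+1}) \le c$, and hence $d(y, y_n) \le nc$. Since a $\lambda$-firmly nonexpansive mapping is by definition nonexpansive, $d(x_n, y_n) \le d(x,y) \le b$, and therefore $d(x_0, x_n) \le d(x,y) + d(y, y_n) + d(y_n, x_n) \le 2b + nc$ for every $n$. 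So the whole task reduces to showing that this \emph{linear} upper bound on $d(x_0, x_n)$, combined with the rigidity furnished by $\lambda$-firm nonexpansiveness, forces $a_n$ below $c + \varepsilon$ no later than at the index $\Phi(\varepsilon, b, \lambda)$.

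The rigidity is extracted as follows. Applying (\ref{def_firmly-non}) to the pair $(x_n, y)$ and using convexity of the metric twice, then clearing the factor $1 - \lambda$, one obtains
\[ (1+\lambda)\, d(x_{n+1}, Ty) \le (1-\lambda)\, d(x_n, y) + \lambda\, d(x_n, Ty) + \lambda\, d(x_{n+1}, y), \]
together with a companion family of inequalities obtained by applying (\ref{def_firmly-non}) to the pairs $(x_i, x_j)$ and controlling $d(x_{i+1}, x_{j+1})$ by the neighbouring distances. Feeding into these the triangle bounds $|\,d(x_n, Ty) - d(x_n, y)\,| \le c$ and $|\,d(x_{n+1}, z) - d(x_n, z)\,| \le a_n$ together with the monotonicity of $(a_n)$ yields a recursion which says, quantitatively, that while $a_n$ stays above $c$ the orbit $(x_n)$ cannot ``fold back'' on itself faster than at a rate governed by $\lambda$; roughly, $\sum_{k<n} a_k$ cannot exceed $d(x_0, x_n)$ by more than a controlled error. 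Since $\sum_{k<n} a_k \ge n\,a_{n-1}$ by monotonicity while $d(x_0, x_n) \le 2b + nc$, this is the mechanism that drives $a_n \to c$.

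To turn this into the explicit $\Phi$ I would isolate a purely arithmetical lemma in the spirit of \cite[Lemma 5.4]{AriLeuLop12} about a non-increasing sequence in $\mathbb{R}_{\ge 0}$ subject to the recursion above and to a linear majorant, and then carry out the proof-mining computation on it. The argument runs by contradiction: if $d(T^N x, T^{N+1} x) > c + \varepsilon$ for $N = \Phi(\varepsilon, b, \lambda)$, monotonicity gives $a_n > c + \varepsilon$ for all $n \le N$; splitting $[0,N]$ into $M = \lceil 4b/\varepsilon \rceil$ consecutive windows of length $\lceil 2b(1 + e^{KM})/\varepsilon \rceil$, one shows that over each window either $(a_n)$ must decrease by at least $\varepsilon/4$ — so that after $M$ windows the total decrease would exceed $a_0 \le b$, impossible — or else $d(x_0, x_n)$ is forced past $2b + nc$. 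The window length, with $K = \lceil 1/\lambda \rceil$, is precisely the horizon over which the ``no fast folding'' effect, iterated against the geometric resistance $\lambda/(1+\lambda)$ present in the firm recursion, becomes strong enough to bite; the exponential factor $e^{KM}$ enters here for the same reason it enters Kohlenbach and Leu\c stean's quantitative analysis \cite{KohLeu03} of the Borwein--Reich--Shafrir lemma \cite{BorReiSha92}. Alternatively, one may simply analyse (``mine'') the qualitative proof given in \cite{AriLeuLop12} directly.

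The main obstacle is exactly this last step. The underlying qualitative fact — a monotone sequence plus a non-folding inequality squeezed by a linear majorant — has a short proof, but converting it into a correct, fully explicit rate requires carrying every error term through the recursion, and it is in this bookkeeping that the exponential dependence on $1/\varepsilon$ becomes, as for the Krasnoselski--Mann iteration, unavoidable with the present method.
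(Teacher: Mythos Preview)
Your proposal has the right architecture---monotonicity of $a_n = d(T^n x, T^{n+1}x)$, the linear bound $d(x_0,x_n) \le 2b + nc$ coming from nonexpansiveness and $d(y_n,y_{n+1})\le c$, and a pigeonhole over windows of length $M$---and this matches the paper's strategy exactly. But the heart of the argument is missing: you never state, let alone prove, the quantitative ``no-folding'' inequality that makes the window argument bite. You allude to ``a companion family of inequalities obtained by applying (\ref{def_firmly-non}) to the pairs $(x_i,x_j)$'' and to a recursion that ``roughly'' says $\sum_{k<n} a_k$ cannot exceed $d(x_0,x_n)$ by more than a controlled error, but this is precisely the step that carries all the work, and you explicitly flag it yourself as ``the main obstacle.''

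The paper fills this gap with a sharp lemma proved by induction on the window length (using \cite[Lemma~5.6]{AriLeuLop12} for the inductive step): for all $i,n\ge 1$,
\[
n\,d(T^i x, T^{i+1}x)\ \le\ d(T^i x, T^{i+n}x)\ +\ n\Bigl(1+\bigl(\tfrac{1+\lambda}{\lambda}\bigr)^{n}\Bigr)\bigl(d(T^i x,T^{i+1}x)-d(T^{i+n}x,T^{i+n+1}x)\bigr).
\]
Note this bounds $n\,a_i$ (the value at the \emph{start} of the window) rather than $\sum_{k<n}a_k$ as you suggest; your version is weaker by a factor roughly $a_i/a_{i+n-1}$, though monotonicity would let you recover. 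The exponential $(\tfrac{1+\lambda}{\lambda})^n \le e^{Kn}$ is where $e^{KM}$ arises. Once this lemma is in hand, the endgame is short and cleaner than your dichotomy: by pigeonhole there is a window $[kM,(k+1)M]$ with drop at most $\delta=\varepsilon/(2(1+e^{KM}))$, apply the lemma there, bound $d(T^{kM}x,T^{(k+1)M}x)$ by $2b + M\gamma$ via the triangle through $T^{kM}y$ and $T^{(k+1)M}y$, divide by $M$, and you are done. So the structure you outlined is correct, but the proof is not complete until the displayed inequality is actually established.
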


Although we considered the setting of geodesic spaces with convex metric, the result holds in metric spaces for which there exists a convexity mapping $W$ satisfying (W1)-(W3). Therefore, the above result holds in particular in every normed space and so our result is new even in the setting of normed spaces. Before proving the theorem let us give some consequences thereof. We remark first that we obtain the result proved in \cite{AriLeuLop12}.

\begin{corollary}
Let $(X,d)$ be a geodesic space with convex metric, $C \subseteq X$ nonempty and $T : C \to C$ $\lambda$-firmly nonexpansive. Then, for each $x \in C$, 
\[\lim_{n \to \infty} d\left(T^n x, T^{n+1} x\right) = r_C(T).\]
\end{corollary}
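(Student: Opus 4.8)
The plan is to read off the limit directly from the quantitative Theorem \ref{th-fn-rate}, combined with the elementary monotonicity fact recorded earlier for mappings satisfying (\ref{def_firmly-non}). First I would fix $x \in C$ and observe that, since $T$ satisfies (\ref{def_firmly-non}), applying the inequality $d(T^2z, Tz) \le d(Tz, z)$ (established in the text) with $z = T^n x$ shows that the sequence $\big(d(T^n x, T^{n+1} x)\big)_n$ is non-increasing; being bounded below by $0$, it converges, say to $L$. It therefore suffices to identify $L$ with $r_C(T)$.

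For the lower bound, note that for each $n$ the point $z = T^n x$ belongs to $C$, so $d(T^n x, T^{n+1} x) = d(z, Tz) \ge r_C(T)$; letting $n \to \infty$ gives $L \ge r_C(T)$. Incidentally this also makes clear that $r_C(T) \le d(x, Tx) < \infty$, so every quantity in play is finite and the argument is not vacuous.

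For the upper bound, fix $\varepsilon > 0$ and, using that $r_C(T)$ is an infimum, choose $y \in C$ with $d(y, Ty) \le r_C(T) + \varepsilon$. Put $b := \max\{d(x,y), d(x,Tx)\}$, which is finite, and invoke Theorem \ref{th-fn-rate}: for every $n \ge \Phi(\varepsilon, b, \lambda)$ one has $d(T^n x, T^{n+1} x) \le d(y, Ty) + \varepsilon \le r_C(T) + 2\varepsilon$. Passing to the limit in $n$ yields $L \le r_C(T) + 2\varepsilon$, and since $\varepsilon$ was arbitrary, $L \le r_C(T)$. Together with the lower bound this gives $L = r_C(T)$, i.e. $\lim_{n\to\infty} d(T^n x, T^{n+1} x) = r_C(T)$.

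I do not expect any genuine obstacle here: the content is entirely in Theorem \ref{th-fn-rate}, and the corollary is a routine "$\limsup \le r_C(T) \le \liminf$" extraction. The only mild points of care are noting that $r_C(T)$ is finite (so that approximating minimizers $y$ exist) and that the bound $b$ in Theorem \ref{th-fn-rate} is allowed to depend on the chosen pair $x, y$, which is harmless since both are fixed before the limit in $n$ is taken.
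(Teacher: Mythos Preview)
Your argument is correct and is exactly the routine extraction the paper has in mind: the corollary is stated without proof in the paper, being an immediate consequence of Theorem \ref{th-fn-rate} via the obvious lower bound $d(T^n x, T^{n+1}x) \ge r_C(T)$ and the upper bound obtained by choosing $y$ to approximate the infimum. Your use of the monotonicity of $\big(d(T^n x, T^{n+1}x)\big)_n$ to guarantee the limit exists is a clean touch, though one could equally phrase it via $\limsup$ and $\liminf$.
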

 
If the set $C$ is bounded, we can obtain a uniform rate of asymptotic regularity which only depends on $\varepsilon$, on an upper bound $b$ on the diameter of the set $C$ and on $\lambda$. This rate is exponential in $1/\varepsilon$.
\begin{corollary}
Let $(X,d)$ be a geodesic space with convex metric, $C \subseteq X$ a nonempty and bounded set with diameter $d_C \le b$. Suppose $T : C \to C$ is $\lambda$-firmly nonexpansive. Then,
\[\forall x \in C, \forall \varepsilon > 0, \forall n \ge \Phi(\varepsilon, b, \lambda), \quad d\left(T^n x, T^{n+1} x\right) \le \varepsilon,\]
where $\Phi(\varepsilon, b, \lambda)$ is given by (\ref{th_fn-rate-exp}). 
\end{corollary}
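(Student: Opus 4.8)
The plan is to obtain this as an immediate specialization of Theorem~\ref{th-fn-rate}, which we may assume. For arbitrary $x,y\in C$ the four points $x,Tx,y,Ty$ all lie in $C$, so $\max\{d(x,y),d(x,Tx)\}\le d_C\le b$; hence Theorem~\ref{th-fn-rate} applies with this very $b$ to every pair $x,y\in C$ and yields, for all $\varepsilon>0$ and all $n\ge\Phi(\varepsilon,b,\lambda)$,
\[
d(T^nx,T^{n+1}x)\le d(y,Ty)+\varepsilon ,
\]
with $\Phi(\varepsilon,b,\lambda)$ as in~(\ref{th_fn-rate-exp}). The essential point is that this bound does not involve $y$, so for fixed $x$, $\varepsilon$ and $n\ge\Phi(\varepsilon,b,\lambda)$ the inequality holds uniformly over $y\in C$.

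I would then take the infimum over $y\in C$ of the right-hand side. Since $\inf_{y\in C}d(y,Ty)=r_C(T)$ by the definition of the minimal displacement (and $C\ne\emptyset$), this gives $d(T^nx,T^{n+1}x)\le r_C(T)+\varepsilon$ for every $x\in C$, every $\varepsilon>0$ and every $n\ge\Phi(\varepsilon,b,\lambda)$.

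It remains to note that $r_C(T)=0$. Indeed, $T$ is $\lambda$-firmly nonexpansive and $C$ is bounded, so all orbits of $T$ are bounded; hence $T$ is asymptotically regular (this is the geodesic counterpart of the asymptotic regularity of firmly nonexpansive mappings --- valid in geodesic spaces with convex metric, cf.\ \cite[Corollary~5.3]{AriLeuLop12}; equivalently, the averaged mapping $T_\lambda=(1-\lambda)I+\lambda T$ has bounded orbits and is therefore asymptotically regular by \cite[Proposition~2]{GoeKir83}, whence $\lambda\,r_C(T)\le\lim_n d(T_\lambda^nx,T_\lambda^{n+1}x)=0$). Substituting $r_C(T)=0$ into the previous inequality yields $d(T^nx,T^{n+1}x)\le\varepsilon$ for all $n\ge\Phi(\varepsilon,b,\lambda)$, which is exactly the assertion. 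There is no genuine obstacle here --- the statement is a routine consequence of Theorem~\ref{th-fn-rate} --- the only points requiring a moment's care being that one fixed diameter bound $b$ discharges the hypothesis $b\ge\max\{d(x,y),d(x,Tx)\}$ of the theorem simultaneously for all $x,y\in C$ (so that $\Phi(\varepsilon,b,\lambda)$ is uniform in $x$ and $y$), and that one should invoke the already-available asymptotic regularity of firmly nonexpansive self-maps of bounded sets in order to replace $r_C(T)$ by $0$.
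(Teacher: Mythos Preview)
Your proof is correct and follows essentially the same route as the paper: apply Theorem~\ref{th-fn-rate} uniformly over $x,y\in C$ (which the diameter bound allows), take the infimum over $y$ to replace $d(y,Ty)$ by $r_C(T)$, and then invoke $r_C(T)=0$ for bounded $C$ via \cite[Corollaries~5.2,~5.3]{AriLeuLop12}. One small caveat: your parenthetical alternative via the averaged mapping $T_\lambda=(1-\lambda)I+\lambda T$ tacitly needs $C$ to be convex so that $T_\lambda$ maps $C$ into $C$ and its orbits stay in $C$, which the corollary does not assume; your primary citation to \cite[Corollary~5.3]{AriLeuLop12} already suffices, so this aside is unnecessary.
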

\begin{proof} The result follows by Theorem \ref{th-fn-rate} using the fact that $r_C(T) = 0$ when $C$ is bounded (see \cite[Corollaries 5.2, 5.3]{AriLeuLop12}).
\end{proof}

We remark that in the case of normed spaces, no effective bounds on the asymptotic regularity have been computed so far. Another rate of asymptotic regularity for $\lambda$-firmly nonexpansive mappings was obtained in the more particular setting of $UCW$-hyperbolic spaces in \cite{AriLeuLop12}. For the special case of CAT$(0)$ spaces, this result provides a much better rate since it is quadratic in $1/\varepsilon$, while in $L_p$ spaces with $1 < p < \infty$ it is polynomial in $1/\varepsilon$. Therefore, our result is not optimal for these classes of geodesic spaces. However, it is the only bound computed so far that holds in the wide setting of geodesic spaces with convex metric.

We prove next an inequality for mappings that satisfy condition (\ref{def_firmly-non}) which is needed in the proof of the main result of this section.
\begin{lemma}\label{lemma-fn}
Let $(X,d)$ be a geodesic space with convex metric, $C \subseteq X$ nonempty and suppose $T:C \to C$ satisfies (\ref{def_firmly-non}). Then, for each $x \in C$ and $i,n \ge 1$,
\begin{align*}
n d(T^i x, T^ {i+1} x) \le d(T^i x, T^{i+n} x) + n\left( 1 + \left(\frac{1 + \lambda}{\lambda}\right)^n\right) \left(d(T^i x, T^{i+1} x) - d(T^{i+n} x, T^{i+n+1} x) \right).
\end{align*} 
\end{lemma}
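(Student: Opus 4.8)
The plan is to fix $x \in C$ and abbreviate $a_k := d(T^k x, T^{k+1} x)$. By the remark after the definition of $\lambda$-firmly nonexpansive mappings (the displayed computation showing $d(T^2 z, T z) \le d(T z, z)$ applied to $z = T^k x$), the sequence $(a_k)$ is non-increasing; in particular $a_i \ge a_{i+n}$, so the difference $a_i - a_{i+n}$ appearing on the right-hand side is nonnegative, and a telescoping sum will let me control it. The key local estimate I would establish first is a one-step inequality relating $d(T^i x, T^{i+j} x)$ to $d(T^{i+1} x, T^{i+j+1} x)$ together with a correction term built from $a_i - a_{i+1}$. This should come from applying (\ref{def_firmly-non}) to the pair $x' = T^i x$, $y' = T^{i+j} x$: it gives $d(T^{i+1} x, T^{i+j+1} x) \le d((1-\lambda)T^i x + \lambda T^{i+1} x,\ (1-\lambda) T^{i+j} x + \lambda T^{i+j+1} x)$, and then convexity of the metric together with the triangle inequality, repositioning both midpoints back to $T^i x$ and $T^{i+j} x$ respectively, should yield something like
\[
d(T^i x, T^{i+j} x) \le d(T^{i+1} x, T^{i+j+1} x) + \tfrac{1}{\lambda}\bigl(\text{multiples of } a_i, a_{i+j}\bigr),
\]
or more precisely an estimate where the correction is a bounded multiple of $a_i - a_{i+j+1}$ or simply of $a_i$.

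Next I would iterate this one-step inequality. Writing $b_j := d(T^i x, T^{i+j} x)$ and noting $b_1 = a_i$, the one-step bound should have the shape $b_j \le b_{j-1}' + c\,a_{\cdot}$ where shifting the base index by one replaces $T^i x$ by $T^{i+1}x$; to keep everything anchored at $T^i x$ I would instead prove by induction on $n$ the compound statement of the lemma directly. The geometric factor $\left(\frac{1+\lambda}{\lambda}\right)^n$ strongly suggests that each iteration multiplies an accumulated error by $\frac{1+\lambda}{\lambda} = 1 + \frac1\lambda$, so the inductive step should take the bound for $n$, apply the one-step inequality once more (introducing a factor $1 + 1/\lambda$ on the error term and one new increment $a_i - a_{i+n+1}$ or similar), and use $n\bigl(1 + (\frac{1+\lambda}{\lambda})^n\bigr) \cdot \frac{1+\lambda}{\lambda} + (\text{new term}) \le (n+1)\bigl(1 + (\frac{1+\lambda}{\lambda})^{n+1}\bigr)$ to close. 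Monotonicity of $(a_k)$ is used repeatedly to replace various $a_{i+j}$ by $a_{i+n}$ (making the telescoped error collapse to $a_i - a_{i+n}$) and to bound $d(T^i x, T^{i+n} x) \le d(T^i x, T^{i+n} x)$ trivially while keeping that term on the right as stated.

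The main obstacle I anticipate is getting the bookkeeping on the correction terms exactly right so that the sum of increments telescopes cleanly to $d(T^i x, T^{i+1} x) - d(T^{i+n} x, T^{i+n+1} x)$ with precisely the coefficient $n\bigl(1 + (\frac{1+\lambda}{\lambda})^n\bigr)$ and no worse; in particular I must be careful that when I shift the anchor point during iteration I correctly convert errors measured at $T^{i+j}x$ back to errors measured at $T^i x$ using only the non-increasing property of $(a_k)$, rather than accumulating a faster-growing constant. A secondary subtlety is the use of condition (\ref{def_firmly-non}) alone (not full nonexpansivity), which is why the argument must route every step through the midpoint inequality rather than through $d(Tx', Ty') \le d(x', y')$; since convex metric is assumed, the triangle-plus-convexity manipulations are legitimate, but I should double-check that no step secretly requires Busemann convexity or uniqueness of geodesics.
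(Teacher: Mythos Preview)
There is a genuine gap. Your one-step inequality is aimed in the wrong direction, and the manipulation you describe will not produce the amplification factor $(1+\lambda)/\lambda$ that drives the induction.

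First, rewrite the lemma as a lower bound: with $a_k := d(T^k x, T^{k+1}x)$ the claim is equivalent (after subtracting $n(a_i-a_{i+n})$ from both sides) to
\[
d(T^i x, T^{i+n}x)\;\ge\; n\,a_{i+n}\;-\;n\Bigl(\tfrac{1+\lambda}{\lambda}\Bigr)^{n}\bigl(a_i-a_{i+n}\bigr),
\]
so what you need is a \emph{lower} bound on $d(T^i x, T^{i+n}x)$. Your proposed relation $d(T^i x, T^{i+j}x)\le d(T^{i+1}x, T^{i+j+1}x)+\text{(error)}$ goes the other way; and the derivation you sketch---apply (\ref{def_firmly-non}) with $x'=T^i x$, $y'=T^{i+j}x$, then use convexity and the triangle inequality to reposition the midpoints---only yields the harmless bound $d(T^{i+1}x,T^{i+j+1}x)\le d(T^i x, T^{i+j}x)+\lambda(a_i+a_{i+j})$, which neither has the right sign nor isolates a difference $a_i-a_{\,\cdot\,}$.

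The missing ingredient is a one-step \emph{lower} bound with a coefficient strictly larger than $1$ on the shifted distance. The paper imports this from \cite[Lemma~5.6]{AriLeuLop12}: for $u=T^i x$, $v=T^{i+n}x$,
\[
d(u,Tv)\;\ge\;\frac{1+\lambda}{\lambda}\,d(Tu,Tv)\;-\;\frac{1-\lambda}{\lambda}\,d(u,v)\;-\;d(Tu,v).
\]
The factor $(1+\lambda)/\lambda>1$ in front of $d(Tu,Tv)=d(T^{i+1}x,T^{i+n+1}x)$ is the whole point: when you induct on $n$ and apply the hypothesis with $i$ replaced by $i+1$, this factor multiplies the coefficient $n\bigl(\tfrac{1+\lambda}{\lambda}\bigr)^n$ coming from the hypothesis, and together with the two remaining terms (bounded via triangle inequalities by multiples of $a_i$) you close the step to $(n+1)\bigl(\tfrac{1+\lambda}{\lambda}\bigr)^{n+1}$. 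Without this gain inequality there is no mechanism in your outline that would ever produce the geometric factor $\bigl(\tfrac{1+\lambda}{\lambda}\bigr)^n$; iterating an inequality with coefficient $1$ and additive error of size $O(a_i)$ would at best give an error $O(n\,a_i)$, which does not telescope to $a_i-a_{i+n}$ and cannot yield the stated conclusion.
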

\begin{proof}
We prove first by induction that for each $i,n \ge 1$,
\begin{align}\label{lemma-fn-eq1}
 d(T^i x, T^{i+n} x) \ge n d(T^{i+n}x, T^{i+n+1} x) - n\left(\frac{1 + \lambda}{\lambda}\right)^n \left(d(T^i x, T^{i+1} x) - d(T^{i+n} x, T^{i+n+1} x) \right).
\end{align} 
For $n =1$, the above inequality obviously holds for each $i \ge 1$. Suppose (\ref{lemma-fn-eq1}) holds for each $i \ge 1$. We prove that for $i \ge 1$ we have
\begin{align*}
d(T^i x, T^{i+n + 1} x) &\ge (n+1) d(T^{i+n+1}x, T^{i+n+2} x)\\
& \ \ \ \ - (n+1)\left(\frac{1 + \lambda}{\lambda}\right)^{n+1} \left(d(T^i x, T^{i+1} x) - d(T^{i+n+1} x, T^{i+n+2} x) \right).
\end{align*}
Note that \cite[Lemma 5.6]{AriLeuLop12} also holds in geodesic spaces with convex metric. Thus,
\begin{align*}
d(T^i x, T^{i+n+1} x) & \ge \frac{1+\lambda}{\lambda} d(T^{i+1} x, T^{i+n+1} x) - \frac{1-\lambda}{\lambda} d(T^{i+n} x, T^i x) - d(T^{i+1} x, T^{i+n} x)\\
& \ge \frac{1+\lambda}{\lambda} n d(T^{i+n +1}x, T^{i+n+2} x) - n\left(\frac{1 + \lambda}{\lambda}\right)^{n+1} \left(d(T^{i+1} x, T^{i+2} x) - d(T^{i+n+1} x, T^{i+n+2} x) \right)\\
& \ \ \ \ - \frac{1-\lambda}{\lambda} n d(T^i x, T^{i+1} x) - (n-1) d(T^i x, T^{i+1} x)\\
& \ge (n+1) d(T^{i+n +1}x, T^{i+n+2} x) + \left(\frac{n}{\lambda} -1\right) d(T^{i+n +1}x, T^{i+n+2} x) - \left(\frac{n}{\lambda} -1\right) d(T^i x, T^{i+1} x)\\
&\ \ \ \  -  n\left(\frac{1 + \lambda}{\lambda}\right)^{n+1} \left(d(T^{i} x, T^{i+1} x) - d(T^{i+n+1} x, T^{i+n+2} x) \right)\\
& = (n+1) d(T^{i+n +1}x, T^{i+n+2} x)\\
& \ \ \ \ - \left(\frac{n}{\lambda} -1 + n\left(\frac{1 + \lambda}{\lambda}\right)^{n+1}\right) \left(d(T^{i} x, T^{i+1} x) - d(T^{i+n+1} x, T^{i+n+2} x) \right)\\
& \ge (n+1) d(T^{i+n+1}x, T^{i+n+2} x)\\
& \ \ \ \  - (n+1)\left(\frac{1 + \lambda}{\lambda}\right)^{n+1} \left(d(T^i x, T^{i+1} x) - d(T^{i+n+1} x, T^{i+n+2} x) \right).
\end{align*} 
Hence, $(\ref{lemma-fn-eq1})$ is proved. From $(\ref{lemma-fn-eq1})$ we immediately obtain the conclusion.
\end{proof}

\subsection{Proof of Theorem \ref{th-fn-rate}}

The methods that we employ are inspired by the ones used in \cite{Koh01} and \cite{KohLeu03} for computing rates of asymptotic regularity for the Krasnoselski-Mann iteration of nonexpansive and directionally nonexpansive mappings.

Let $x,y \in C$ and $\varepsilon > 0$. We may suppose that $x$ is not a fixed point of $T$ (otherwise the conclusion is obvious). Define $\gamma := d(y, Ty)$ and let $\displaystyle \delta := \frac{\varepsilon}{2(1 + e^{KM})}$, $\displaystyle j := \left\lceil\frac{d(x,Tx)}{\delta} \right\rceil$. Then there exists $1 \le k \le j$ such that
\begin{align} \label{th-ef-fn-eq1}
D_k := d\left(T^{kM} x,T^{kM +1} x\right) - d\left(T^{(k+1)M} x, T^{(k+1)M +1} x\right) \le \delta.
\end{align}
Indeed, supposing on the contrary that for each $k = 1, \ldots, j$, $D_k > \delta$ we obtain that $\displaystyle\sum_{k=1}^j D_k > \delta j$ which yields that
\[d\left(T^{M} x,T^{M +1} x\right) > d(x, Tx),\]
a contradiction.\\
Let $1 \le k \le j$ satisfy (\ref{th-ef-fn-eq1}). Apply Lemma \ref{lemma-fn} with $i := kM$ and $n := M$ to get that
\begin{align*}
M d\left(T^{kM} x, T^{kM +1} x\right) & \le  d\left(T^{kM} x, T^{(k+1)M} x\right)  \\
& \ \ \ \ + M\left( 1 + \left(\frac{1 + \lambda}{\lambda}\right)^M\right) \left(d(T^{kM} x, T^{kM+1} x) - d(T^{(k+1)M} x, T^{(k+1)M +1} x) \right).
\end{align*}
Note that
\[\left(\frac{1 + \lambda}{\lambda}\right)^M = e^{M \ln \left(1 + \frac{1}{\lambda}\right)} \le e^\frac{M}{\lambda} \le e^{KM}.\]
Using (\ref{th-ef-fn-eq1}) we obtain that
\begin{align*}
d\left(T^{kM} x, T^{kM +1} x\right) & \le \frac{1}{M}\left(d\left(T^{kM} x, T^{kM} y \right) + d\left(T^{kM} y, T^{(k+1)M} y \right) + d\left(T^{(k+1)M} y, T^{(k+1)M} x \right)\right)\\
& \ \ \ \ + \left(1 + e^{KM}\right)\delta\\
& \le \frac{1}{M}\left(2d(x,y) + M\gamma\right) + \left(1 + e^{KM}\right)\delta \le \gamma + \frac{2b}{M} +
\frac{\varepsilon}{2} = \gamma + \varepsilon.
\end{align*}
Since
\[k \le \left\lceil\frac{d(x,Tx)}{\delta} \right\rceil \le \left\lceil\frac{b}{\delta} \right\rceil = \left\lceil \frac{2 b (1 + e^{KM})}{\varepsilon}\right\rceil,\]
it follows that $kM \le \Phi$. Thus, for $n \ge \Phi$, 
\[d\left(T^n x, T^{n+1} x\right) \le d\left(T^{kM} x, T^{kM +1} x\right) \le \gamma + \varepsilon,\]
which ends the proof.

Note that in this result we may suppose that $T$ is not nonexpansive and solely satisfies condition (\ref{def_firmly-non}) if $b$ is chosen such that $b \ge d(x,Tx)$ and $b \ge d(T^n x,T^n y)$ for each $n  \in \mathbb{N}$.

\section{Effective rates of asymptotic regularity in solving the convex feasibility problem}\label{sect_conv-feas-pr}
In this section we focus on giving effective rates of asymptotic regularity for sequences obtained by two algorithms that are employed in the study of the convex feasibility problem in a nonlinear setting. In Hilbert spaces, the convex feasibility problem consists in finding a point in the intersection of finitely many nonempty closed and convex sets, assuming that there is such a point. There exist several algorithms which use metric projections on the sets in order to build sequences of points that converge to a point in this intersection. Bauschke and Borwein studied in \cite{BauBor96} a very general method for solving the convex feasibility problem which unifies several important algorithms. We consider next two such algorithmic schemes in a nonlinear setting and use methods of proof mining to give effective results on the asymptotic regularity of the sequences obtained. 

The first method we focus on is the alternating projection method for two sets developed by von Neumann \cite{vNeu33} which is probably the most well-known and straightforward algorithm (for an elementary geometric proof of von Neumann's theorem see \cite{KopRei04}). This method can also be used in a nonlinear setting \cite{Rei93, BacSeaSim12}. Moreover, if one assumes some regularity conditions on the sets, one obtains linear convergence of this method \cite{BauBor93, BacSeaSim12}. Here we compute a rate of asymptotic regularity for the sequence generated by the alternating projection algorithm when no additional conditions are considered on the sets. Note that in this case it is known that one can only obtain weak convergence of this sequence \cite{Hun04, MatRei03}.

Let us first give the precise definition of the metric projection. Let $X$ be a metric space and $Y \subseteq X$. We define the {\it distance of a point} $z \in X$ to $Y$ by $\displaystyle \mbox{dist}(z,Y) = \inf_{y \in Y}d(z,y)$. The {\it metric projection} $P_Y$ onto $Y$ is the mapping
\[P_Y(z)=\{ y \in Y : d(z,y)=\mbox{dist}(z,Y)\} \quad \mbox{for every } z\in X.\]
In any CAT(0) space the metric projection onto convex and complete subsets is a single-valued and firmly nonexpansive mapping \cite[Proposition 3.1]{AriLeuLop12}.

Let $(X,d)$ be a complete CAT$(0)$ space. The method of alternating projections for two sets can be described in the following way: let $A,B \subseteq X$ be nonempty closed and convex. Given $x_0 \in X$, define, for $n \ge 1$, 
\begin{equation} \label{AP_eq}
x_{2n-1} := P_A(x_{2n-2}) \quad \text{and} \quad x_{2n} := P_B(x_{2n-1}).
\end{equation}

We start with the following simple property of firmly nonexpansive mappings that have fixed points.

\begin{lemma} \label{AP_lemma}
Suppose $C$ is a nonempty subset of $X$ and $T:C \to C$ is firmly nonexpansive with a nonempty set of fixed points $\emph{Fix}(T)$. Let $x \in C$ and $p \in \emph{Fix}(T)$. Then,
\[d(x,Tx)^2 \le d(x,p)^2 - d(Tx, p)^2.\]
\end{lemma}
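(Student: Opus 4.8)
The plan is to exploit the defining inequality of firm nonexpansivity together with the fact that $p$ is a fixed point, and then to invoke the $(CN)$ inequality available in the CAT$(0)$ setting. First I would instantiate condition (\ref{def_firmly-non}) with $y := p$; since $Tp = p$, the midpoint-type convex combination on the right collapses, so $(1-\lambda)p + \lambda Tp = p$ for every $\lambda \in (0,1)$, and we obtain $d(Tx,p) \le d((1-\lambda)x + \lambda Tx, p)$. Because $T$ is firmly nonexpansive (hence this holds for all $\lambda \in (0,1)$ with a fixed geodesic segment joining $x$ and $Tx$), I would specialize to $\lambda = 1/2$ and write $m := \tfrac{1}{2}x + \tfrac{1}{2}Tx$ for the midpoint of that segment, yielding $d(Tx,p) \le d(m,p)$.

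Next I would apply the $(CN)$ inequality with the roles $x \leftarrow p$, $y_1 \leftarrow x$, $y_2 \leftarrow Tx$, $t \leftarrow 1/2$, so that
\[
d(p,m)^2 \le \frac{1}{2}d(p,x)^2 + \frac{1}{2}d(p,Tx)^2 - \frac{1}{4}d(x,Tx)^2.
\]
Combining this with $d(Tx,p)^2 \le d(m,p)^2$ from the previous step gives
\[
d(Tx,p)^2 \le \frac{1}{2}d(x,p)^2 + \frac{1}{2}d(Tx,p)^2 - \frac{1}{4}d(x,Tx)^2,
\]
and rearranging (moving $\tfrac12 d(Tx,p)^2$ to the left and multiplying by $2$) yields $d(Tx,p)^2 \le d(x,p)^2 - \tfrac{1}{2}d(x,Tx)^2$. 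To reach the stated bound $d(x,Tx)^2 \le d(x,p)^2 - d(Tx,p)^2$ I would instead keep the factor as is: rearranging gives exactly $\tfrac14 d(x,Tx)^2 \le \tfrac12 d(x,p)^2 - \tfrac12 d(Tx,p)^2$, i.e. $d(x,Tx)^2 \le 2\big(d(x,p)^2 - d(Tx,p)^2\big)$, which is slightly weaker; to get the sharp constant one should use the convexity of $\varphi_{x,p}$ more carefully rather than just the value at $\lambda=1/2$.

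The cleaner route, which I expect to be the one that gives the exact constant, is this: since $T$ is firmly nonexpansive, for \emph{every} $\lambda\in(0,1)$ we have $d(Tx,p) \le d((1-\lambda)x+\lambda Tx,\,p)$, and applying $(CN)$ to the point $(1-\lambda)x+\lambda Tx$ on the segment $[x,Tx]$ gives $d((1-\lambda)x+\lambda Tx,p)^2 \le (1-\lambda)d(x,p)^2 + \lambda d(Tx,p)^2 - \lambda(1-\lambda)d(x,Tx)^2$. Hence
\[
d(Tx,p)^2 \le (1-\lambda)d(x,p)^2 + \lambda d(Tx,p)^2 - \lambda(1-\lambda)d(x,Tx)^2,
\]
so $(1-\lambda)d(Tx,p)^2 \le (1-\lambda)d(x,p)^2 - \lambda(1-\lambda)d(x,Tx)^2$, and dividing by $1-\lambda$ yields $d(Tx,p)^2 \le d(x,p)^2 - \lambda\, d(x,Tx)^2$ for all $\lambda\in(0,1)$. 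Letting $\lambda \to 1$ gives $d(Tx,p)^2 \le d(x,p)^2 - d(x,Tx)^2$, which is exactly the claim. The only subtle point — and the main thing to check — is that the same fixed geodesic segment joining $x$ to $Tx$ is used both in the definition of firm nonexpansivity and in the application of $(CN)$, which is legitimate since in a CAT$(0)$ space geodesics are unique; no obstacle remains beyond this bookkeeping.
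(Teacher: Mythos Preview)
Your ``cleaner route'' in the final paragraph is correct and is precisely the paper's proof: apply firm nonexpansivity with $y=p$ for arbitrary $\lambda\in(0,1)$, use the $(CN)$ inequality for the point $(1-\lambda)x+\lambda Tx$, rearrange to $d(Tx,p)^2 \le d(x,p)^2 - \lambda\,d(x,Tx)^2$, and let $\lambda\nearrow 1$. The initial detour through $\lambda=1/2$ is unnecessary (as you noticed, it only yields the constant $2$), but the argument you settle on matches the paper exactly.
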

\begin{proof}
For each $\lambda \in (0,1)$,
\[d(Tx, p)^2 = d(Tx,Tp)^2 \le d\left((1-\lambda)x + \lambda Tx, p\right)^2 \le (1-\lambda)d(x,p)^2 + \lambda d(Tx,p)^2 - \lambda(1-\lambda)d(x,Tx)^2.\]
Thus,
\[d(Tx, p)^2 \le d(x,p)^2 - \lambda d(x,Tx)^2.\]
Letting $\lambda \nearrow 1$ we obtain the conclusion.
\end{proof}

The next result provides a rate of asymptotic regularity of the sequence $(x_n)$ generated by (\ref{AP_eq}). 

\begin{theorem}
Let $(X,d)$ be a complete CAT$(0)$ space and $A,B \subseteq X$ be nonempty closed and convex with $A \cap B \ne \emptyset$. Consider $x_0 \in X$ and $b > 0$ such that there exists $p \in A \cap B$ with $d(x_0,p) \le b$. Define the sequence $(x_n)$ by (\ref{AP_eq}). Then,
\[\forall \varepsilon > 0, \forall n \ge \Phi(\varepsilon, b), \quad d\left(x_n, x_{n+1} \right) \le \varepsilon,\]
where
\[
 \Phi(\varepsilon, b):=\begin{cases}\displaystyle
\left[\frac{b^2}{\varepsilon^2}\right] & \text 
{for  }\varepsilon<2b, \\
 0 & \text{otherwise}.
\end{cases}
\]
\end{theorem}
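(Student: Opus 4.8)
The plan is to exploit Lemma \ref{AP_lemma} together with the fact that both $P_A$ and $P_B$ are firmly nonexpansive with a common fixed point $p \in A \cap B$. Since $p \in A$ we have $P_A(p) = p$, and similarly $P_B(p) = p$. Applying Lemma \ref{AP_lemma} to $T = P_A$ at the point $x_{2n-2}$ (noting $x_{2n-1} = P_A(x_{2n-2})$) gives
\[
d(x_{2n-2}, x_{2n-1})^2 \le d(x_{2n-2}, p)^2 - d(x_{2n-1}, p)^2,
\]
and applying it to $T = P_B$ at $x_{2n-1}$ gives
\[
d(x_{2n-1}, x_{2n})^2 \le d(x_{2n-1}, p)^2 - d(x_{2n}, p)^2.
\]
Combining these, for every $n \ge 1$,
\[
d(x_n, x_{n+1})^2 \le d(x_n, p)^2 - d(x_{n+1}, p)^2.
\]
Thus the sequence $\bigl(d(x_n,p)\bigr)$ is nonincreasing and bounded below by $0$, hence convergent; moreover, since $d(x_0,p) \le b$, we have $d(x_n,p) \le b$ for all $n$.

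The next step is a standard telescoping argument. Summing the inequality $d(x_k,x_{k+1})^2 \le d(x_k,p)^2 - d(x_{k+1},p)^2$ over $k = 0, 1, \dots, N-1$ yields
\[
\sum_{k=0}^{N-1} d(x_k, x_{k+1})^2 \le d(x_0,p)^2 - d(x_N, p)^2 \le b^2.
\]
Since $\bigl(d(x_n,x_{n+1})\bigr)$ is itself nonincreasing — this follows because $d(x_{n+1},x_{n+2}) \le d(x_n,x_{n+1})$, as $x_{n+1}$ is obtained from $x_n$ by a (firmly) nonexpansive map and each of $P_A, P_B$ is nonexpansive, so $d(P(x_n),P(x_{n+1})) \le d(x_n,x_{n+1})$ with $P$ the appropriate projection — we can bound the $N$-th term by the average: for any $N \ge 1$,
\[
N\, d(x_N, x_{N+1})^2 \le \sum_{k=N}^{2N-1} d(x_k, x_{k+1})^2 \le \sum_{k=0}^{2N-1} d(x_k,x_{k+1})^2 \le b^2,
\]
hence $d(x_N, x_{N+1}) \le b/\sqrt{N}$. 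Actually the cleaner route is: since the terms are nonincreasing, $N\, d(x_{N}, x_{N+1})^2 \le \sum_{k=0}^{N-1} d(x_k,x_{k+1})^2 \le b^2$, giving the same conclusion $d(x_N, x_{N+1}) \le b/\sqrt{N}$.

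Finally, to extract the stated rate: given $\varepsilon > 0$ with $\varepsilon < 2b$, we want $b/\sqrt{n} \le \varepsilon$, i.e. $n \ge b^2/\varepsilon^2$, so $n \ge \lceil b^2/\varepsilon^2 \rceil$ (written $[b^2/\varepsilon^2]$ in the statement) suffices. When $\varepsilon \ge 2b$, note $d(x_0,x_1) \le d(x_0,p) + d(p,x_1) \le 2b \le \varepsilon$, and since $\bigl(d(x_n,x_{n+1})\bigr)$ is nonincreasing, $d(x_n,x_{n+1}) \le \varepsilon$ already for all $n \ge 0$, so $\Phi(\varepsilon,b) = 0$ works. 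The main point requiring care — and the only genuine obstacle — is the monotonicity of $\bigl(d(x_n,x_{n+1})\bigr)$, which is needed to turn the summability bound into a pointwise rate; this rests on the nonexpansivity of the metric projections $P_A, P_B$ in CAT$(0)$ spaces, guaranteed by \cite[Proposition 3.1]{AriLeuLop12}. Everything else is the telescoping of the Pythagorean-type inequality from Lemma \ref{AP_lemma}.
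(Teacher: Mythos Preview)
Your argument follows the same route as the paper: derive the Pythagorean-type inequality $d(x_n,x_{n+1})^2 \le d(x_n,p)^2 - d(x_{n+1},p)^2$ from Lemma~\ref{AP_lemma}, telescope to get $\sum d(x_k,x_{k+1})^2 \le b^2$, and combine this with monotonicity of $\bigl(d(x_n,x_{n+1})\bigr)$ to extract the rate. The structure is right.

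There is, however, a real gap in your justification of monotonicity. You write ``$d(P(x_n),P(x_{n+1})) \le d(x_n,x_{n+1})$ with $P$ the appropriate projection'', but this does not yield $d(x_{n+1},x_{n+2}) \le d(x_n,x_{n+1})$: the projections alternate, so $x_{n+1}$ and $x_{n+2}$ are \emph{not} the images of $x_n$ and $x_{n+1}$ under the \emph{same} map $P$. If, say, $x_{n+1}=P_A(x_n)$, then $P_A(x_{n+1})=x_{n+1}$ (not $x_{n+2}$), and your inequality collapses to $0\le d(x_n,x_{n+1})$. The correct argument---the one the paper uses---is a nearest-point argument: for $n\ge 1$, if $x_n\in A$ then $x_{n+2}=P_A(x_{n+1})$, so
\[
d(x_{n+1},x_{n+2}) = \operatorname{dist}(x_{n+1},A) \le d(x_{n+1},x_n),
\]
since $x_n\in A$; similarly if $x_n\in B$. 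This uses the definition of the metric projection rather than nonexpansivity.

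One minor point: summing from $k=0$ to $N$ (that is, $N+1$ terms) rather than to $N-1$ gives $(N+1)\,d(x_N,x_{N+1})^2 \le b^2$, which delivers the floor bound $\left[\,b^2/\varepsilon^2\right]$ stated in the theorem, rather than the ceiling you arrive at.
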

\begin{proof}
Let $p \in A \cap B = \text{Fix}(P_A) \cap \text{Fix}(P_B)$ with $d(x_0,p) \le b$. By \cite[Lemma 3.4]{BacSeaSim12} we know that $d(x_{n+1}, p) \le d(x_n, p)$ for all $n \ge 0$ and so,
\[d(x_n,x_{n+1}) \le d(x_n, p) + d(x_{n+1},p) \le 2b.\]
Hence, the case $\varepsilon \ge 2b$ is clear. Suppose $\varepsilon < 2b$ and define
\[N := \left[\frac{b^2}{\varepsilon^2}\right].\]
Since the metric projection onto closed and convex subsets is firmly nonexpansive, we can apply Lemma \ref{AP_lemma} to conclude that
\begin{equation}\label{AP-th-eq}
d(x_n, x_{n+1})^2 \le d(x_n, p)^2 - d(x_{n+1},p)^2.
\end{equation}
Indeed, without loss of generality we may assume that $x_n \in A$. Then $x_{n+1} = P_B(x_n)$, so
\[d(x_n, x_{n+1})^2 = d(x_n, P_B(x_n))^2 \le  d(x_n, p)^2 - d(P_B(x_n),p)^2 =  d(x_n, p)^2 - d(x_{n+1},p)^2.\]
Suppose that for each $n = 0, \ldots, N$, $d(x_n,x_{n+1}) > \varepsilon$. Since, by (\ref{AP-th-eq}),
\[\sum_{i=0}^N d(x_i, x_{i+1})^2 \le d(x_0,p)^2 - d(x_{N+1},p)^2 \le b^2,\]
it follows that $(N+1)\varepsilon^2 < b^2$, which is a contradiction. Thus, there exists $n \le N$ such that $d(x_n,x_{n+1}) \le \varepsilon$. Note that for each $i \ge 0$, $d(x_{i+1}, x_{i+2}) \le d(x_i, x_{i+1})$. To see this, fix $i \ge 0$. Again without loss of generality we can suppose that $x_i \in A$. Then, since $x_{i+2} = P_A(x_{i+1})$, we have that
\[d(x_{i+1}, x_{i+2}) = d(x_{i+1}, P_A(x_{i+1})) = \text{dist}(x_{i+1}, A) \le d(x_{i+1}, x_i).\]
Therefore, the sequence $(d(x_i, x_{i+1}))$ is non-increasing and we are done.
\end{proof}

The second algorithmic scheme that we deal with extends a method studied in Hilbert spaces by Crombez \cite{Cro91} and later generalized by Takahashi and Tamura \cite{TakTam96} to the setting of uniformly convex Banach spaces. This method focuses on the Picard iterates of a mapping defined in terms of weighted averages of nonexpansive retractions. We give in the sequel the precise definition in a nonlinear setting.

Let $(X,d)$ be a Busemann convex geodesic space and $C \subseteq X$ nonempty and convex. Let $r \in \mathbb{N}$, $r \ge 2$ and for every $i = 1, \ldots, r$, suppose that $T_i : C \to C$ is nonexpansive. Let $(\alpha_i)_{i=1}^r \subseteq (0,1)$ be a weight, that is, $\displaystyle \sum_{i = 1}^r \alpha_i = 1$. Define $\displaystyle a_j = \sum_{i = j}^r \alpha_i$, $j = 1, \ldots, r$. Define the mapping $S_0 : C \to C$, $S_0 := T_r$.\\ 
If $r \ge 3$, for $i = 1, \ldots, r-2$, consider
\[S_i : C \to C, \quad S_i := \frac{\alpha_{r-i}}{a_{r-i}} T_{r-i} + \frac{a_{r-i+1}}{a_{r-i}} S_{i-1}.\]
Note that $1-a_2 = \alpha_1$ and $S_i$ is nonexpansive for each $i = 0, \ldots, r-2$. 
We focus next on the nonexpansive mapping $T : C \to C$,
\begin{equation}\label{def-T-cv-comb}
T := \left(1 - a_2\right) T_1 + a_2 S_{r-2}.
\end{equation}
Remark that in the setting of normed spaces, the mapping $T$ is defined by $\displaystyle T:= \sum_{i=1}^r \alpha_i T_i$. Iterates of such mappings $T$ in Banach spaces are studied in \cite{Rei83}.

\begin{lemma} \label{lemma-image}
For $i = 1, \ldots, r$, let $C_i \subseteq C$ with $\displaystyle\bigcap_{i = 1}^r C_i \ne \emptyset$ and $P_i : C \to C_i$ be nonexpansive retractions. Define $T_i : C \to C$, $T_i = (1-\lambda_i) I + \lambda_i P_i$, where $\lambda_i \in (0,1)$. Then, $\emph{Fix}(T) = \displaystyle\bigcap_{i = 1}^r C_i$.
\end{lemma}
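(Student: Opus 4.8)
The plan is to prove the two inclusions separately. For the inclusion $\bigcap_{i=1}^r C_i \subseteq \mathrm{Fix}(T)$ I would take $x \in \bigcap_{i=1}^r C_i$; since each $P_i$ retracts $C$ onto $C_i$ we get $P_i x = x$, hence $T_i x = (1-\lambda_i)x + \lambda_i P_i x = x$ for every $i$. Then a short induction on $i$ along the recursion defining $S_i$ gives $S_i x = x$ for $i = 0, \ldots, r-2$ (the base case is $S_0 x = T_r x = x$, and the inductive step only uses that a geodesic combination of a point with itself is that point, the space being uniquely geodesic), and therefore $Tx = (1-a_2)T_1 x + a_2 S_{r-2}x = x$.

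For the reverse inclusion, fix $p \in \bigcap_{i=1}^r C_i$, so that $P_i p = p$ and hence, by nonexpansivity of $P_i$ together with convexity of the metric, $d(T_i x, p) \le (1-\lambda_i)d(x,p) + \lambda_i d(P_i x, p) \le d(x,p)$ for each $i$. Next I would establish, again by induction on $i$ and using convexity of the metric along the recursion for $S_i$, the weighted-average estimate $d(S_i x, p) \le \sum_{j=r-i}^{r} (\alpha_j/a_{r-i})\, d(T_j x, p)$; taking $i = r-2$ and combining with $T = (1-a_2)T_1 + a_2 S_{r-2}$ and $1-a_2 = \alpha_1$ yields $d(Tx,p) \le \sum_{i=1}^r \alpha_i\, d(T_i x, p)$.

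Now assume $Tx = x$. If $x = p$ we are done, so suppose $d(x,p) > 0$. Chaining the two facts gives $d(x,p) = d(Tx,p) \le \sum_{i=1}^r \alpha_i\, d(T_i x,p) \le \sum_{i=1}^r \alpha_i\, d(x,p) = d(x,p)$, so every inequality is an equality; since the $\alpha_i$ are positive and each summand is at most $d(x,p)$, we get $d(T_i x, p) = d(x,p)$ for all $i$, and then the sandwiched inequality of the previous paragraph also forces $d(P_i x, p) = d(x,p)$. Thus the points $x$, $T_i x$ and $P_i x$ all lie on the geodesic segment $[x, P_i x]$ (recall $T_i x = (1-\lambda_i)x + \lambda_i P_i x$) and are all at distance $d(x,p)$ from $p$; since the map $t \mapsto d((1-t)x + tP_i x, p)$ is convex on $[0,1]$ and equals $d(x,p)$ at $0$, at $\lambda_i \in (0,1)$ and at $1$, it is identically $d(x,p)$, so in particular the midpoint of $[x,P_i x]$ is also at distance $d(x,p)$ from $p$. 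As a Busemann convex space is strictly convex, this is impossible unless $x = P_i x$; hence $x = P_i x \in C_i$ for every $i$, i.e. $x \in \bigcap_{i=1}^r C_i$.

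The two inductions over the auxiliary mappings $S_i$ are routine bookkeeping with convexity of the metric. The one genuinely non-mechanical step is the last one — converting the string of distance equalities into $x = P_i x$ — and it is precisely there that strict convexity of the ambient Busemann convex space enters; I expect that to be the only real obstacle.
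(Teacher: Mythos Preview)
Your proof is correct and follows essentially the same approach as the paper: both inclusions are handled the same way, with the reverse inclusion obtained from the chain of equalities $d(x,p) = d(T_i x, p) = d(P_i x, p)$ followed by strict convexity of the Busemann convex space to force $P_i x = x$. Your final step --- passing via convexity of $t \mapsto d((1-t)x + tP_i x, p)$ to the midpoint before invoking strict convexity --- is just a slightly more careful spelling-out of what the paper does in one line.
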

\begin{proof}
Since $\text{Fix}(T_i) = \text{Fix}(P_i) = C_i$, it is easy to see that $\displaystyle\bigcap_{i = 1}^r C_i \subseteq \text{Fix}(T)$. Pick $x \in \text{Fix}(T)$ and $y \in \displaystyle\bigcap_{i = 1}^r C_i$. We claim that 
\[ a_{2} d\left(S_{r-2} x, y \right) \le \sum_{i=2}^r \alpha_i d\left(T_i x,y\right).\]
If $r=2$ this obviously holds. Suppose $r \ge 3$. Then, for $i = 1, \ldots, r-2$, we have that
\[ a_{r-i} d\left(S_i x, y \right) \le \alpha_{r-i} d\left(T_{r-i} x,y\right) + a_{r-i+1} d\left(S_{i-1} x,y\right),\]
from where our claim follows.\\
Thus,
\begin{align*}
d(x,y) &= d(Tx,y) = d\left((1 - a_2) T_1 x + a_2 S_{r-2} x, y\right) \le (1-a_2) d\left(T_1 x, y\right) + a_2 d\left(S_{r-2} x, y\right)\\ 
& \le \sum_{i=1}^r \alpha_i d\left(T_i x,y\right) = \sum_{i=1}^r \alpha_i d\left((1-\lambda_i) x + \lambda_i P_i x,y\right)\\
& \le \sum_{i=1}^r \alpha_i \left((1-\lambda_i) d(x,y) + \lambda_i d(P_i x,y)\right) \le d(x,y).
\end{align*}
Hence, for each $i = 1, \ldots, r$,
\[d(P_i x, y) = d\left((1-\lambda_i)x + P_i x, y\right) = d(x,y).\]
But since $X$ is strictly convex it follows that $P_i x = x$ for each $i = 1, \ldots, r$ and so $x \in \displaystyle \bigcap_{i=1}^r C_i$.
\end{proof}

The following result provides a rate of asymptotic regularity for the mapping $T$. Note that this result also holds in particular in uniformly convex Banach spaces.

\begin{theorem} \label{th-image-effective}
Let $X$ be a $UCW$-hyperbolic space and $C \subseteq X$ nonempty and convex. For $i = 1, \ldots, r$, let $C_i \subseteq C$ with $\displaystyle\bigcap_{i = 1}^r C_i \ne \emptyset$ and $P_i : C \to C_i$ be nonexpansive retractions. Define $T_i : C \to C$ by $T_i = (1-\lambda_i) I + \lambda_i P_i$, where $\lambda_i \in (0,1)$, $i = 1, \ldots, r$. Let $x \in C$ and $b > 0$ such that there exists $p \in \displaystyle\bigcap_{i = 1}^r C_i$ with $d(x,p) \le b$. Denote $\alpha = (\alpha_1, \ldots, \alpha_r)$, $\lambda=(\lambda_1, \ldots, \lambda_r)$ and, by $\delta$, a monotone modulus of uniform convexity. Then,
\[\forall \varepsilon > 0, \forall n \ge \Phi(\varepsilon, b, \delta, \lambda, \alpha), \quad d\left(T^n x, T^{n+1} x\right) \le \varepsilon,\]
where
\[
 \Phi(\varepsilon, b, \delta, \lambda, \alpha):=\begin{cases}\displaystyle
\left[\frac{b}{\displaystyle \varepsilon  K \delta\left(b,\varepsilon/b\right)}\right] & \text 
{for  }\varepsilon<2b, \\
 0 & \text{otherwise},
\end{cases}
\]
with $K = \min\{\alpha_i \lambda_i (1-\lambda_i) : i = 1, \ldots, r\}$.
\end{theorem}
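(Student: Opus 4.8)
The plan is to combine the fixed-point inequality furnished by Lemma~\ref{AP_lemma} with the uniform convexity of $X$ to extract a quantitative decrease of $d(T^nx,p)$, and then to turn this into a rate of asymptotic regularity for $(d(T^nx,T^{n+1}x))$ exactly as in the alternating projection theorem above. First I would fix $p\in\bigcap_{i=1}^r C_i$ with $d(x,p)\le b$. Since each $T_i=(1-\lambda_i)I+\lambda_i P_i$ with $P_i$ a nonexpansive retraction onto $C_i$, one checks (as in Lemma~\ref{lemma-image}) that $p\in\mathrm{Fix}(T_i)$ for every $i$ and $T$ is nonexpansive, so $d(T^{n+1}x,p)\le d(T^nx,p)\le b$ for all $n$; in particular $d(T^nx,T^{n+1}x)\le 2b$, which disposes of the case $\varepsilon\ge 2b$.

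Next I would derive the key one-step estimate. Writing $z=T^nx$, we have $Tz=(1-a_2)T_1z+a_2S_{r-2}z$, and unwinding the recursive definition of the $S_i$ shows that $Tz$ is a convex combination $\sum_{i=1}^r\alpha_i T_iz$ with weights $\alpha_i$; using (W1)/convexity of the metric and nonexpansivity of the $P_i$ one gets $d(Tz,p)\le\sum_{i=1}^r\alpha_i d(T_iz,p)\le d(z,p)$, with equality only if $d(T_iz,p)=d(z,p)$ for every $i$. The point where uniform convexity enters: for each $i$, $T_iz=(1-\lambda_i)z+\lambda_i P_iz$ is the $\lambda_i$-point on the geodesic from $z$ to $P_iz$, and $d(z,p),d(P_iz,p)\le b$. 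A standard consequence of having a monotone modulus $\delta$ is that for the midpoint one loses a factor $\delta(b,\varepsilon/b)$, and hence for any $\lambda_i$-point one loses at least $2\lambda_i(1-\lambda_i)\delta(b,\varepsilon/b)\cdot b$ (cf.\ the inequalities in \cite{Leu07}); combined over the convex combination this yields, whenever $d(z,P_iz)\ge\varepsilon$ for some $i$,
\[
d(Tz,p)\le d(z,p)-\varepsilon K\delta\!\left(b,\varepsilon/b\right),
\]
with $K=\min\{\alpha_i\lambda_i(1-\lambda_i):i=1,\dots,r\}$. Since $d(z,Tz)\le\sum_i\alpha_i d(z,T_iz)=\sum_i\alpha_i\lambda_i d(z,P_iz)$, if $d(z,Tz)>\varepsilon$ then $d(z,P_iz)>\varepsilon$ for some $i$ (indeed $d(z,P_iz)\ge d(z,Tz)>\varepsilon$ for the maximal one), so the displayed decrease applies.

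Finally, I would run the counting argument: if $d(T^nx,T^{n+1}x)>\varepsilon$ for every $n=0,\dots,N$ with $N=\left[\,b/(\varepsilon K\delta(b,\varepsilon/b))\,\right]$, then telescoping the one-step estimate gives
\[
0\le d(T^{N+1}x,p)\le d(x,p)-(N+1)\,\varepsilon K\delta\!\left(b,\varepsilon/b\right)\le b-(N+1)\,\varepsilon K\delta\!\left(b,\varepsilon/b\right)<0,
\]
a contradiction; hence some $n\le N$ satisfies $d(T^nx,T^{n+1}x)\le\varepsilon$. To conclude I need $(d(T^nx,T^{n+1}x))$ to be non-increasing, which follows from nonexpansivity of $T$ (Busemann convexity gives $d(T^{n+1}x,T^{n+2}x)\le d(T^nx,T^{n+1}x)$), so the bound $\Phi(\varepsilon,b,\delta,\lambda,\alpha)$ is as claimed. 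The main obstacle is the second step: correctly quantifying the loss in $d(\cdot,p)$ at a $\lambda_i$-point of the geodesic (not just the midpoint) using only a monotone modulus of uniform convexity, and bookkeeping the constants so that the $\min$ over $i$ of $\alpha_i\lambda_i(1-\lambda_i)$ emerges cleanly; the monotonicity of $\delta$ in the radius is what allows replacing the various radii $\le b$ uniformly by $b$.
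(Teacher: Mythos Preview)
Your outline matches the paper's proof almost step for step: pick $p\in\bigcap C_i$, use convexity of the metric to bound $d(Tz,p)\le\sum_i\alpha_i d(T_iz,p)$, find an index $i$ with $d(z,P_iz)>\varepsilon$, apply the $\lambda$-point uniform-convexity estimate (the paper cites \cite[Lemma~7]{Leu07}) to get a one-step drop, telescope to a contradiction, and finish with monotonicity of $\bigl(d(T^nx,T^{n+1}x)\bigr)$.

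There is one slip in your key estimate. The uniform-convexity lemma gives
\[
d(T_iz,p)\le\bigl(1-2\lambda_i(1-\lambda_i)\,\delta(b,\varepsilon/b)\bigr)\,d(z,p),
\]
so the loss is $2\lambda_i(1-\lambda_i)\,\delta(b,\varepsilon/b)\cdot d(z,p)$, not $\cdot\,b$; since $d(z,p)\le b$, your ``at least $\ldots\cdot b$'' goes the wrong way. The paper bridges this by the extra observation
\[
\varepsilon< d(z,Tz)\le d(z,p)+d(Tz,p)\le 2\,d(z,p),
\]
whence $d(z,p)>\varepsilon/2$, which converts the multiplicative loss into the additive $\varepsilon\,\lambda_i(1-\lambda_i)\,\delta(b,\varepsilon/b)$; after weighting by $\alpha_i$ one arrives at exactly your displayed one-step inequality with $K=\min_i\alpha_i\lambda_i(1-\lambda_i)$. (Also, Lemma~\ref{AP_lemma} is not used here and can be dropped from your plan.)
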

\begin{proof}
Let $p \in \displaystyle\bigcap_{i = 1}^r C_i$ with $d(x,p) \le b$. By Lemma \ref{lemma-image}, $p \in \text{Fix}(T)$. The case $\varepsilon \ge 2b$ is obvious. Assume $\varepsilon < 2b$ and define
\[N := \left[\frac{b}{\displaystyle \varepsilon  K \delta\left(b,\varepsilon/b\right)}\right].\]
Suppose that for each $n = 0, \ldots, N$, $d\left(T^n x, T^{n+1} x\right) > \varepsilon$. Fix such an $n$. Clearly, $d\left(T^n x, p\right) \le b$. Since
\begin{align*}
\varepsilon < d\left(T^n x, T^{n+1} x\right) = d\left(T^n x, \left(1 - a_2\right) T_1(T^n x) + a_2 S_{r-2} (T^n x)\right) \le \sum_{i=1}^r \alpha_i d\left(T_i(T^n x), T^n x\right),
\end{align*}
it follows that there exists $1 \le i = i(n) \le r$ with $d\left(T_{i}(T^n x), T^n x\right) > \varepsilon$ and so $d\left(P_{i}(T^n x), T^n x\right) > \varepsilon$.\\
Because $d\left(P_i(T^{n} x),p\right) \le d\left(T^n x, p\right)$ we obtain that (see for instance \cite[Lemma 7]{Leu07})
\begin{align*}
d(T_i(T^{n} x), p)  = d\left((1-\lambda_i)T^{n} x +\lambda_i P_i(T^{n} x), p\right) \le \left(1 - 2\lambda_i(1 - \lambda_i)\delta\left(b, \varepsilon/b\right)\right)d\left(T^n x, p\right).
\end{align*}
Note that 
\[\varepsilon < d\left(T^n x, T^{n+1} x\right) \le d\left(T^n x, p\right) + d\left(T^{n+1} x, p\right) \le 2d\left(T^n x, p\right).\]
Thus, we obtain that
\[d(T_i(T^{n} x), p) \le d\left(T^n x, p\right) - \varepsilon\lambda_i(1 - \lambda_i)\delta\left(b,\varepsilon/b\right).\]
However,
\begin{align*}
d(T^{n+1} x, p) &= d\left((1 - a_2) T_1(T^n x) + a_2 S_{r-2} (T^n x), p\right) \le \sum_{j=1}^r \alpha_j d\left(T_j(T^n x), p\right)\\
& \le \alpha_i d\left(T_i(T^n x), p\right)  + (1-\alpha_i) d(T^n x,p).
\end{align*}
This means that
\begin{align*}
d(T^{n+1} x, p) \le d\left(T^n x,p\right) - \varepsilon \alpha_i \lambda_i(1 - \lambda_i)\delta\left(b,\varepsilon/b\right) \le d\left(T^n x,p\right) - \varepsilon K\delta\left(b,\varepsilon/b\right).
\end{align*}
Hence,
\begin{align*}
d(T^{N+1} x, p) \le d(x,p) - (N+1)\varepsilon K\delta\left(b,\varepsilon/b\right) < 0,
\end{align*}
a contradiction. Thus, there exists $n \le N$ such that $d\left(T^n x, T^{n+1} x\right) \le \varepsilon$. Since the sequence $\left(d(T^n x, T^{n+1} x)\right)$ is non-increasing the conclusion follows.
\end{proof}

\begin{remark}
If, in addition, we have that $\delta(r, \varepsilon) \ge \varepsilon \cdot \tilde \delta(r, \varepsilon)$ such that $\tilde \delta$ increases with respect to $\varepsilon$ for $r$ fixed, then $\Phi(\varepsilon, b, \delta, \lambda, \alpha)$ can be replaced for $\varepsilon < 2b$ with
\[ \tilde \Phi(\varepsilon, b, \delta, \lambda, \alpha) = \left[\frac{b}{2 \varepsilon  K\tilde \delta\left(b,\varepsilon/b\right)}\right].\]
\end{remark}
\begin{proof}
Define $r_n : = d\left(T^n x, p\right)$ and
\[N:= \left[\frac{b}{2 \varepsilon  K\tilde \delta\left(b,\varepsilon/b\right)}\right].\]
Like before, we have that
\begin{align*}
d(T_i(T^{n} x), p)  &\le \left(1 - 2\lambda_i(1 - \lambda_i)\delta\left(r_n, \varepsilon/r_n\right)\right)r_n \le r_n  - 2r_n \lambda_i(1 - \lambda_i)\delta\left(b, \varepsilon/r_n\right)\\
& = r_n -2\varepsilon \lambda_i(1 - \lambda_i)\tilde \delta\left(b, \varepsilon/r_n\right) \le r_n -2\varepsilon \lambda_i(1 - \lambda_i)\tilde \delta\left(b, \varepsilon/b\right).
\end{align*}
The conclusion follows using the same reasoning as above.
\end{proof}

Since any CAT$(0)$ space is a $UCW$-hyperbolic space for which $\delta(r,\varepsilon) = \varepsilon^2 /8$ is a modulus of uniform convexity we obtain in this case a quadratic rate of asymptotic regularity in $1/\varepsilon$: for $\varepsilon < 2b$,
\[ \tilde \Phi(\varepsilon, b, \lambda, \alpha) = \left[\frac{4b^2}{\varepsilon^2  K}\right].\]

Let $(\Omega, \Sigma, \mu)$ be a measure space. Recall that for $1 < p < \infty$, $L_p(\mu)$ is uniformly convex and (see, for instance, \cite{Han56}), for $\varepsilon \in [0,2]$, the modulus of uniform convexity is bounded by
\[
\delta_{L_p}(\varepsilon) \ge \begin{cases}\displaystyle
\frac{p-1}{8}\varepsilon^2 & \text 
{for  }1<p\le2, \\
\displaystyle \frac{1}{p 2^p}\varepsilon^p & \text{for } 2<p < \infty.
\end{cases}
\]
Thus, for $\varepsilon < 2b$, we obtain the following expression for the rate of asymptotic regularity
\[
\tilde \Phi(\varepsilon, b, \lambda, \alpha) = \begin{cases}\displaystyle
\frac{4b^2}{\varepsilon^2K(p-1)} & \text 
{for  }1<p\le2, \\
\displaystyle \frac{p 2^{p-1}b^p}{\varepsilon^p K} & \text{for } 2<p < \infty.
\end{cases}
\]

\subsection{$\Delta$-convergence of the method}
In the sequel we briefly discuss the weak convergence of the above algorithm. More precisely, we generalize to a nonlinear setting weak convergence results proved by Crombez \cite{Cro91} in Hilbert spaces and Takahashi and Tamura \cite{TakTam96} in uniformly convex Banach spaces (see also \cite{Rei83}). First we need to recall a concept of weak convergence in metric spaces.

Let $(X,d)$ be a metric space and $(x_n)$ a bounded sequence in $X$. The {\it asymptotic radius} of $(x_n)$ is given by
\[r((x_n)) = \inf\left\{\limsup_{n \to \infty}d(x,x_n) : x \in X\right\},\]
and the {\it asymptotic center} of $(x_n)$ is the possibly empty set
\[A((x_n)) = \left\{x \in X : \limsup_{n \to \infty}d(x,x_n) = r((x_n))\right\}.\]
Note that an element of $A((x_n))$ will also be referred to as an asymptotic center.\\
In any complete uniformly convex metric space with a monotone modulus of uniform convexity, every bounded sequence has a unique asymptotic center \cite{EspFerPia10}. 

We give next a concept of convergence in metric spaces known as $\Delta$-convergence which was introduced by Lim in \cite{Lim76} and further studied by Kirk and Panyanak in \cite{KirPan08} where it was shown that in CAT$(0)$ spaces this concept is somewhat similar to that of weak convergence in Banach spaces. Another concept involves projections on geodesic segments and was given by Jost \cite{Jos94}. This notion proved to be equivalent to $\Delta$-convergence in the framework of CAT$(0)$ spaces \cite{EspFer09}. A bounded sequence $(x_n)$ in a metric space $X$ is said to {\it $\Delta$-converge} to $x \in X$ if $x$ is the unique asymptotic center of $(u_n)$ for every subsequence $(u_n)$ of $(x_n)$.

Consider now the mapping $T$ as defined by (\ref{def-T-cv-comb}). We adapt next to the setting of CAT$(0)$ spaces a method studied by Crombez \cite{Cro91} in Hilbert spaces. 
\begin{corollary}
Let $X$ be a complete CAT$(0)$ space and for $i = 2, \ldots, r$ let $C_i \subseteq X$ be nonempty closed and convex sets with $\displaystyle \bigcap_{i=2}^r C_i\ne \emptyset$. Define $T_1 : X \to X$, $T_1:=I$ and for $i =2, \ldots, r$, $T_i : X \to X$ by $T_i := (1-\lambda_i)I +\lambda_i P_i$, where $\lambda_i \in (0,1)$ and $P_i$ is the metric projection of $X$ onto $C_i$. Then, for all $x \in X$, the Picard iterate $(T^n x)$ $\Delta$-converges to an element of $\displaystyle \bigcap_{i=2}^r C_i$.
\end{corollary}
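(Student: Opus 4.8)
The plan is to exhibit $(T^n x)$ as a bounded, asymptotically regular sequence that is Fej\'er monotone with respect to $\text{Fix}(T) = \bigcap_{i=2}^r C_i$, and then to invoke the standard asymptotic center machinery for $\Delta$-convergence in complete CAT$(0)$ spaces. Fix $p \in \bigcap_{i=2}^r C_i$ and set $b := d(x,p)$; we may assume $b > 0$, since otherwise the orbit is constant and there is nothing to prove. Since $T_1 = I$, we may view $T_1$ as $(1-\lambda_1)I + \lambda_1 P_1$ with $C_1 := X$, $P_1 := I$ and $\lambda_1 := 1/2$, so that $T$ is precisely the mapping covered by Lemma \ref{lemma-image}; as $\bigcap_{i=1}^r C_i = \bigcap_{i=2}^r C_i$, that lemma gives $\text{Fix}(T) = \bigcap_{i=2}^r C_i \ni p$. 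Because $X$ is CAT$(0)$, hence Busemann convex, $T$ is nonexpansive, so $d(T^{n+1}x, p) = d(T^{n+1}x, Tp) \le d(T^n x, p)$; thus $(d(T^n x, p))$ is non-increasing, $(T^n x)$ is bounded, and $\lim_n d(T^n x, q)$ exists for every $q \in \text{Fix}(T)$. Finally, a complete CAT$(0)$ space is $UCW$-hyperbolic and the metric projections $P_i$ are nonexpansive retractions, so Theorem \ref{th-image-effective} applies to $x$ (with the bound $b$) and gives $d(T^n x, T^{n+1}x) \to 0$.

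Next I would establish a demiclosedness-type statement: the unique asymptotic center of every subsequence of $(T^n x)$ lies in $\text{Fix}(T)$. Recall that in a complete CAT$(0)$ space every bounded sequence has a unique asymptotic center. Let $(u_n)$ be a subsequence of $(T^n x)$ with $A((u_n)) = \{u\}$; since each $Tu_n$ is the successor of $u_n$ along the orbit, asymptotic regularity gives $d(u_n, Tu_n) \to 0$. From $d(Tu, u_n) \le d(Tu, Tu_n) + d(Tu_n, u_n) \le d(u, u_n) + d(Tu_n, u_n)$ we obtain $\limsup_n d(Tu, u_n) \le \limsup_n d(u, u_n) = r((u_n))$; since $r((u_n))$ is the infimum defining the asymptotic center, $Tu$ is also an asymptotic center of $(u_n)$, and uniqueness forces $Tu = u$.

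It remains to conclude $\Delta$-convergence. Let $w$ denote the unique asymptotic center of the whole sequence $(T^n x)$; by the previous step $w \in \text{Fix}(T)$, so $\ell_w := \lim_n d(T^n x, w)$ exists, and likewise $\ell_u := \lim_n d(T^n x, u)$ exists for the asymptotic center $u \in \text{Fix}(T)$ of an arbitrary subsequence $(u_n)$ of $(T^n x)$. Since the limit along the full orbit exists, $\limsup_n d(u_n, u) = \ell_u$ and $\limsup_n d(u_n, w) = \ell_w$; the defining minimality of the asymptotic center of $(u_n)$ gives $\ell_u \le \ell_w$ with equality only if $u = w$, while minimality of the asymptotic center of $(T^n x)$ gives $\ell_w \le \ell_u$. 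Hence $\ell_u = \ell_w$ and therefore $u = w$, so every subsequence of $(T^n x)$ has asymptotic center $w$, i.e. $(T^n x)$ $\Delta$-converges to $w \in \bigcap_{i=2}^r C_i$. The only step demanding genuine care is the demiclosedness argument; but since asymptotic regularity is already supplied by Theorem \ref{th-image-effective} and uniqueness of asymptotic centers is available in complete CAT$(0)$ spaces, it collapses to the short estimate above, and no real obstacle remains.
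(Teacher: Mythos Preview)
Your proof is correct, but it takes a different route from the paper's, and the comparison is instructive.

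The paper's proof hinges on the observation that, because $T_1=I$, the mapping $T=(1-a_2)T_1+a_2 S_{r-2}=(1-a_2)I+a_2 S_{r-2}$ is \emph{averaged} (with $S_{r-2}$ nonexpansive). Since $\text{Fix}(T)\ne\emptyset$ by Lemma~\ref{lemma-image}, the orbit is bounded and asymptotic regularity follows from the Goebel--Kirk result for averaged mappings; the $\Delta$-convergence is then read off from \cite[Proposition~6.3]{AriLeuLop12}. This is precisely what distinguishes the present corollary from the one that follows it: the assumption $T_1=I$ is what makes $T$ averaged and allows the short route.

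You instead absorb $T_1=I$ into the framework of Theorem~\ref{th-image-effective} by writing it as $(1-\tfrac12)I+\tfrac12 P_1$ with $P_1=I$, $C_1=X$, and obtain asymptotic regularity from that theorem. You then reprove the $\Delta$-convergence step by hand via the standard demiclosedness/Fej\'er-monotonicity/asymptotic-center argument. This is entirely valid and more self-contained, but it does not exploit the averaged structure; in effect your argument is the same as the one the paper gives for the \emph{next} corollary (where $T_1$ is not the identity and the averaged shortcut is unavailable). What the paper's approach buys is brevity and a conceptual explanation of why this particular case is easier; what yours buys is independence from the external reference \cite{AriLeuLop12}.
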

\begin{proof}
Since $T$ is in this case averaged and nonexpansive, and, by Lemma \ref{lemma-image}, $\text{Fix}(T) \ne \emptyset$, we know that $T$ is asymptotically regular. Apply \cite[Proposition 6.3]{AriLeuLop12} to obtain the conclusion.
\end{proof}

More generally, we have the following extension of \cite[Theorem 3.3]{TakTam96} in the geodesic setting.
\begin{corollary}
Let $X$ be a complete $UCW$-hyperbolic space and $C \subseteq X$ nonempty closed and convex. For $i = 1, \ldots, r$, let $C_i \subseteq C$ with $\displaystyle\bigcap_{i = 1}^r C_i \ne \emptyset$ and $P_i : C \to C_i$ be nonexpansive retractions. Define $T_i : C \to C$ by $T_i = (1-\lambda_i) I + \lambda_i P_i$, where $\lambda_i \in (0,1)$, $i = 1, \ldots, r$. Then, for all $x \in X$, the Picard iterate $(T^n x)$ $\Delta$-converges to an element of $\displaystyle \bigcap_{i=1}^r C_i$.
\end{corollary}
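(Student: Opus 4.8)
The plan is to deduce the corollary from the asymptotic regularity already established in Theorem~\ref{th-image-effective}, combined with a $\Delta$-convergence criterion for nonexpansive mappings in complete $UCW$-hyperbolic spaces. First I would record the structural facts: since a $UCW$-hyperbolic space is Busemann convex, the mapping $T$ from~(\ref{def-T-cv-comb}) is nonexpansive, and by Lemma~\ref{lemma-image} its fixed point set equals $\bigcap_{i=1}^r C_i$, which is nonempty by hypothesis. Fixing $x\in C$ and choosing $p\in\bigcap_{i=1}^r C_i=\text{Fix}(T)$, the orbit $(T^n x)$ satisfies $d(T^n x,p)\le d(x,p)$ and hence is bounded; and Theorem~\ref{th-image-effective} (whose proof in fact shows that $(d(T^n x,T^{n+1}x))_n$ is non-increasing and converges to $0$) tells us that $(T^n x)$ is asymptotically regular, i.e.\ $d(T^n x,T(T^n x))\to 0$.

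Next I would invoke that in a complete $UCW$-hyperbolic space every bounded sequence has a unique asymptotic center (\cite{EspFerPia10}), together with the demiclosedness-type fact that if $(y_n)$ is bounded with $d(y_n,Ty_n)\to 0$ for a nonexpansive $T$, then the unique asymptotic center of $(y_n)$ is a fixed point of $T$. Applying this to $(T^n x)$ yields $A((T^n x))=\{w\}$ with $w\in\text{Fix}(T)=\bigcap_{i=1}^r C_i$. To obtain $\Delta$-convergence to $w$, let $(u_n)$ be an arbitrary subsequence of $(T^n x)$; then $(u_n)$ is again bounded and asymptotically regular for $T$, so its unique asymptotic center $u$ also lies in $\text{Fix}(T)$. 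Since $Tw=w$ and $Tu=u$, the sequences $(d(T^n x,w))_n$ and $(d(T^n x,u))_n$ are non-increasing and hence convergent, and because $(u_n)$ is a subsequence of $(T^n x)$ we have $\lim_n d(u_n,w)=\lim_n d(T^n x,w)$ and $\lim_n d(u_n,u)=\lim_n d(T^n x,u)$. The characterizations of the asymptotic centers then give $\lim_n d(T^n x,w)=r((T^n x))\le\limsup_n d(T^n x,u)$ and $\lim_n d(u_n,u)=r((u_n))\le\limsup_n d(u_n,w)$, which forces all four of these limits to coincide. In particular $\limsup_n d(u_n,w)=r((u_n))$, so $w$ is an asymptotic center of $(u_n)$, and uniqueness gives $u=w$. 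Since every subsequence of $(T^n x)$ has $w$ as its unique asymptotic center, $(T^n x)$ $\Delta$-converges to $w\in\bigcap_{i=1}^r C_i$.

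The step I expect to be the main obstacle is the demiclosedness ingredient in the $UCW$-hyperbolic generality, namely that the unique asymptotic center of a bounded asymptotically regular sequence for a nonexpansive mapping is fixed. In CAT$(0)$ spaces this is contained in \cite[Proposition~6.3]{AriLeuLop12}, which was used in the preceding corollary; in the wider $UCW$-hyperbolic setting one either cites the corresponding statement from the literature on uniformly convex $W$-hyperbolic spaces (e.g.\ \cite{Leu07}) or reproves it directly from uniform convexity: if $d(y_n,Ty_n)\to0$ and $\{w\}=A((y_n))$, then $\limsup_n d(y_n,Tw)\le\limsup_n\big(d(y_n,Ty_n)+d(Ty_n,Tw)\big)=\limsup_n d(Ty_n,Tw)\le\limsup_n d(y_n,w)=r((y_n))$, so $Tw$ is also an asymptotic center and therefore $Tw=w$ by uniqueness. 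Everything else in the argument is the routine asymptotic-center bookkeeping of the kind already used in \cite{AriLeuLop12}.
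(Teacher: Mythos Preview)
Your argument is correct and follows the same two-step strategy as the paper: establish asymptotic regularity of $T$ via Theorem~\ref{th-image-effective}, then invoke a $\Delta$-convergence criterion for nonexpansive mappings with fixed points. The paper's proof is simply a one-line citation of \cite[Proposition~6.3]{AriLeuLop12} for the second step, applied directly in the $UCW$-hyperbolic setting (so your worry that this proposition might be restricted to CAT$(0)$ spaces is unfounded --- it is stated there in the $UCW$-hyperbolic generality). What you have done is essentially unpack that citation: your demiclosedness computation and the asymptotic-center bookkeeping for subsequences together reproduce the content of that proposition, and both pieces are carried out correctly.
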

\begin{proof}
Since by Theorem \ref{th-image-effective} it follows that $T$ is asymptotically regular, we only need to apply \cite[Proposition 6.3]{AriLeuLop12} to obtain the conclusion.
\end{proof}

\section*{Acknowledgements}
I would like to thank Lauren\c tiu Leu\c stean for fruitful discussions on this topic. This research was supported by a grant of the Romanian National Authority for Scientific Research, CNCS – UEFISCDI, project number PN-II-ID-PCE-2011-3-0383.

\end{document}